\title{The Euler Totient Function on Lucas Sequences}
\author{J.C. Saunders}
\newtheorem{definition}{Definition}
\newtheorem{theorem}{Theorem}
\newtheorem{lemma}{Lemma}
\newtheorem{notation}{Notation}
\newtheorem{case}{Case}
\newtheorem{remark}{Remark}
\theoremstyle{definition}
\begin{document}
\maketitle
\begin{abstract}
In 2009, Luca and Nicolae \cite{luca2} proved that the only Fibonacci numbers whose Euler totient function is another Fibonacci number are $1,2$, and $3$. In 2015, Faye and Luca \cite{faye2} proved that the only Pell numbers whose Euler totient function is another Pell number are $1$ and $2$. Here we add to these two results and prove that for any fixed natural number $P\geq 3$, if we define the sequence $\left(u_n\right)_n$ as $u_0=0$, $u_1=1$, and $u_n=Pu_{n-1}+u_{n-2}$ for all $n\geq 2$, then the only solution to the Diophantine equation $\varphi\left(u_n\right)=u_m$ is $\varphi\left(u_1\right)=\varphi(1)=1=u_1$.
\end{abstract}
\section{Introduction}
There has been much study on Diophantine equations involving the Euler totient function $\varphi$, which counts the number of positive integers up to a certain positive integer $n$ that are coprime up to $n$, and \textit{binary recurrence sequences}. A \textit{binary recurrence sequence} is a sequence of integers that satisfies a given recursion relation of the form $u_n=Pu_{n-1}+Qu_{n-2}$ where $P$ and $Q$ are fixed integers and $\left(u_n\right)_n$ is the sequence in question. There are two important kinds of binary recurrence sequences. A \textit{Lucas sequence of the first kind} is a binary recurrence sequence $\left(u_n\right)_n$, starting with $u_0=0$ and $u_1=1$. A \textit{Lucas sequence of the second kind} is a binary recurrence sequence $\left(v_n\right)_n$, starting with $v_0=2$ and $v_1=P$. There are many results on Diophantine equations of the form $\varphi\left(u_n\right)=v_m$ where $\left(u_n\right)_n$ and $\left(v_n\right)_n$ are Lucas sequences of the first or second kind. For instance, in 2002, Luca gave a general result on such an equation, proving that if a few technical properties of the sequences in question hold, then the number of solutions to such an equation is finite with the upper bound being computable \cite{luca4}. Since then, there have been a number of other similar results, in particular, when terms of a Lucas sequence get mapped to terms in the same Lucas sequence, see \cite{bai}, \cite{chen}, \cite{faye4}, \cite{faye3}, \cite{faye2}, \cite{luca}, and \cite{luca2}. There are also such results and study on the occurrence of the Euler totient function of terms in a Lucas sequence giving a factorial, a power of $2$, or a product of a power of $2$ and a power of $3$, see \cite{damir}, \cite{luca5}, \cite{luca7}, and \cite{saunders}. Here we prove the following.
\begin{theorem}\label{thm}
Let $P\geq 3$ be a natural number and take the Lucas sequence of the first kind that is $u_0=0$, $u_1=1$, and $u_n=Pu_{n-1}+u_{n-2}$ for all $n\geq 2$. Let $\varphi$ denote the Euler totient function. Then the only solution to the Diophantine equation $\varphi\left(u_n\right)=u_m$ is $\varphi\left(u_1\right)=\varphi(1)=1=u_1$.
\end{theorem}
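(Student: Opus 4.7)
The plan is to combine classical analytic estimates for $\varphi$ with the arithmetic of the rank of apparition in the Lucas sequence $\left(u_n\right)_n$. Set $\alpha=\tfrac{1}{2}\!\left(P+\sqrt{P^2+4}\right)$ and $\beta=-1/\alpha$, so Binet's formula gives $u_n=\left(\alpha^n-\beta^n\right)/(\alpha-\beta)$ and $\alpha^{n-1}\leq u_n\leq \alpha^n/(\alpha-\beta)$, with $\alpha>P\geq 3$. Since $\varphi(N)\leq N$ with equality only at $N=1$, the case $m=n$ forces $u_n=1$ and yields the stated solution $n=m=1$; so from here on assume $n\geq 2$ and $m<n$. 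The Rosser--Schoenfeld lower bound $\varphi(N)\gg N/\log\log N$ combined with the Binet estimate gives
\[
\alpha^m \;\gg\; u_m = \varphi\!\left(u_n\right) \;\gg\; \frac{u_n}{\log\log u_n} \;\gg\; \frac{\alpha^n}{\log n},
\]
so $n-m\leq C\log\log n$ for some constant $C=C(P)$. In particular, $m$ is pinned in a narrow window just below $n$.

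I would next bring in the arithmetic structure of the sequence. For each prime $p$, let $z(p)$ denote its rank of apparition, the least $k\geq 1$ with $p\mid u_k$. Standard Lucas theory gives $p\mid u_n\Longleftrightarrow z(p)\mid n$ and $z(p)\mid p-\left(\tfrac{P^2+4}{p}\right)$, so $n\mid p^2-1$ whenever $p$ is a primitive prime divisor of $u_n$, and hence $p\geq n-1$. The discriminant $P^2+4$ is positive and non-square for $P\geq 3$, so the Bilu--Hanrot--Voutier refinement of Carmichael's theorem supplies such a primitive divisor $q_n$ for every $n$ beyond a small absolute threshold; then $\left(q_n-1\right)\mid \varphi\!\left(u_n\right)=u_m$ and $q_n-1\geq n-2$. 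Moreover, $u_n\geq P\geq 3$ for $n\geq 2$, so $\varphi\!\left(u_n\right)$ is even, forcing $u_m$ even, which restricts $m$ modulo $2$ when $P$ is even and modulo $3$ when $P$ is odd.

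To close the argument I would exploit the cyclotomic-like factorisation $u_n=\prod_{d\mid n,\,d>1}\Phi_d(\alpha,\beta)$ and the strong divisibility property $\gcd\!\left(u_a,u_b\right)=u_{\gcd(a,b)}$. Rewriting the target equation as
\[
\prod_{p\mid u_n}\!\left(1-\frac{1}{p}\right)=\frac{u_m}{u_n}\;\geq\; \alpha^{-C\log\log n-O(1)},
\]
one sees $\sum_{p\mid u_n}1/(p-1)=O(\log\log n)$, so the small prime divisors of $u_n$ are heavily constrained by $z(p)\mid n$. Combining $\left(q_n-1\right)\mid u_m$ with the upper bound $u_m\leq u_n/\alpha^{n-m}$ and a careful comparison of $2$-adic valuations (which bifurcates according to the parity of $P$) should force a contradiction for all $n$ above an effectively computable bound $n_0=n_0(P)$; the finitely many remaining $n\leq n_0$ can then be dispatched by direct computation.

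The main obstacle is this last step: orchestrating the divisibility and valuation constraints precisely enough that no $m$ in the admissible window $\left[n-C\log\log n,\,n-1\right]$ can accommodate $\varphi\!\left(u_n\right)$. I expect the argument to branch by the parity of $P$ and by the shape of the small divisors of $n$, and to require separate treatment of the sporadic low-index cases in which Carmichael's theorem is inapplicable or the primitive divisor $q_n$ behaves atypically (for example when $q_n=n-1$, the only way $q_n-1$ can be strictly less than $n$, or when $\Phi_d(\alpha,\beta)$ has an intrinsic prime factor coming from $d$).
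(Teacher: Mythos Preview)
Your broad strategy---bound $l:=n-m$ via Rosser--Schoenfeld, then exploit ranks of apparition and primitive prime divisors---is exactly how the paper begins, and your Lemma-level observations (evenness of $u_m$, $z(p)\mid p\pm 1$, $q_n\ge n-1$) all appear there. But what you have written is a plan, not a proof, and you yourself flag the decisive step as an ``obstacle.'' Two concrete gaps remain.

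First, your proposed endgame is to derive a contradiction for $n>n_0(P)$ and then ``dispatch the finitely many remaining $n\le n_0$ by direct computation.'' Since the theorem ranges over \emph{all} $P\ge 3$, a bound $n_0$ depending on $P$ leaves infinitely many cases; no amount of computation closes that. The paper resolves this by working much harder on the analytic side to obtain bounds that are either uniform in $P$ or that \emph{improve} as $P$ grows: it first forces $m\ge 2^{416}$ and $k:=\omega(u_n)\ge 416$ for every $P\ge 3$, then sharpens $l\le 12$ uniformly, and finally splits off the range $P\ge 80$, where $\log\alpha$ is large enough that the inequality $l\log\alpha<\log\log\alpha+\text{(bounded sum)}$ is already impossible. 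Only for the finitely many values $3\le P\le 79$ is computation invoked, and even there the structure of $n$ (at most three odd prime factors, $s\le 3$, $l\in\{2,4,6,8,10,12\}$) is pinned down analytically first.

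Second, the passage ``combining $(q_n-1)\mid u_m$ with $u_m\le u_n/\alpha^{n-m}$ and a careful comparison of $2$-adic valuations \ldots\ should force a contradiction'' is the heart of the matter, and nothing in your outline indicates how it would go. A single primitive divisor $q_n$ gives you one factor $q_n-1$ of $u_m$, but that is far too weak: the paper needs the full count $2^{k-1}\mid\varphi(u_n)$ of \emph{all} prime divisors of $u_n$, translated via the rank of apparition of $2$ into a divisibility condition on $m$, to bound $k$ against $\log n$; it then iterates this (sections 4--6) to push $l$ down to a constant and to eliminate odd $n$. Reaching $l\le 12$ further requires estimating $\sum_{p\mid u_n}1/p$ by grouping primes according to $z(p)$ and bounding $\sum_{d\mid n}T_d$ via sieve and Mertens-type inequalities---machinery entirely absent from your sketch. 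Without these refinements, the window $[n-C\log\log n,\,n-1]$ is simply too wide for valuation arguments alone to exclude every $m$.
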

\begin{remark}
\normalfont
Theorem \ref{thm} combined with the results in \cite{luca2} and \cite{faye2} completely exhausts the problem of finding solutions to $\varphi\left(u_n\right)=u_m$ where $\left(u_n\right)_n$ is a Lucas sequence of the first kind with recurrence relation $u_n=Pu_{n-1}+u_{n-2}$ and $P>0$. If, instead, we have $P<0$ and $\left(u_n\right)_n$ is the corresponding Lucas sequence, then we can see that $\left(\left|u_n\right|\right)_n$ is the corresponding Lucas sequence of $-P$. Therefore, it suffices to investigate the case of $P>0$ for a Lucas sequence of the first kind with $Q=1$, which we do here.
\end{remark}
To prove Theorem \ref{thm}, we begin by following the proofs in \cite{luca2} and \cite{faye2}, but we also borrow proof techniques from \cite{chen} and \cite{faye2} too. Throughout this paper, we let $\left(u_n\right)_n$ denote the Lucas sequence of the first kind in question and $\left(v_n\right)_n$ denote the corresponding Lucas sequence of the second kind in question as well. First, we prove some preliminary lemmas. Then we prove that $n>m\geq 2^{416}$ and $k\geq 416$, where $k$ is the number of distinct prime factors of $u_n$. Next we obtain an upper bound on $l:=n-m$ in terms of $n$ and $P$. Using this bound we bound the prime factors of $u_n$, giving us an upper bound on $n$ in terms of $k$ and $P$, which helps us to quickly prove that $n$ cannot be odd. Then we prove that $l\leq 12$. The two final sections are devoted to the cases of $P\geq 80$ and $3\leq P\leq 79$.
\section{Preliminary Lemmas}
We first deal with some necessary preliminary lemmas, which stem from well-known results on Lucas sequences in general. Let $D:=P^2+4$, $\alpha:=\frac{P+\sqrt{D}}{2}$ and $\beta:=\frac{P-\sqrt{D}}{2}$. We have the Binet formula for $\left(u_n\right)_n$, which is
\begin{equation*}
u_n=\frac{\alpha^n-\beta^n}{\alpha-\beta},
\end{equation*}
valid for all $n\in\mathbb{N}$. This implies that
\begin{equation*}
\alpha^{n-2}<u_n<\alpha^{n-1}
\end{equation*}
holds for all $n\in\mathbb{N}$. Also, $P\geq 3$ implies that $\log\alpha>1.19$. We will be using this inequality throughout the paper. The following lemmas on the Lucas sequences $\left(u_n\right)_n$ and $\left(v_n\right)_n$ are easy to prove and found in \cite{lehmer}.
\begin{lemma}\label{lem1}
For all $n\geq 0$, we have $u_{2n}=u_nv_n$ and $v_n^2-Du_n^2=4(-1)^n$.
\end{lemma}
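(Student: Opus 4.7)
The plan is to prove both identities directly from the Binet representations of $u_n$ and $v_n$. For the Lucas sequence of the second kind, the companion Binet formula is $v_n=\alpha^n+\beta^n$, which is easily verified by induction or by observing that $\alpha^n+\beta^n$ satisfies the same recurrence and matches the initial values $v_0=2$ and $v_1=P=\alpha+\beta$. I would also record the relations $\alpha-\beta=\sqrt{D}$, $\alpha+\beta=P$, and $\alpha\beta=\frac{P^2-D}{4}=-1$, all of which follow from the fact that $\alpha,\beta$ are the roots of $x^2-Px-1=0$.

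For the first identity, I would simply compute
\begin{equation*}
u_n v_n=\frac{\alpha^n-\beta^n}{\alpha-\beta}\cdot(\alpha^n+\beta^n)=\frac{\alpha^{2n}-\beta^{2n}}{\alpha-\beta}=u_{2n},
\end{equation*}
where the middle step is a difference of squares in $\alpha^n$ and $\beta^n$.

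For the second identity, I would expand
\begin{equation*}
v_n^2-D u_n^2=(\alpha^n+\beta^n)^2-D\cdot\frac{(\alpha^n-\beta^n)^2}{(\alpha-\beta)^2}=(\alpha^n+\beta^n)^2-(\alpha^n-\beta^n)^2=4(\alpha\beta)^n,
\end{equation*}
using $(\alpha-\beta)^2=D$. Substituting $\alpha\beta=-1$ then yields $4(-1)^n$.

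Neither step presents any real obstacle; the only subtlety worth stating is why $v_n=\alpha^n+\beta^n$, which is why I would include that justification first. Everything else is a one-line algebraic manipulation on the closed-form expressions, so the entire lemma is essentially a verification and the main task is simply to lay out the Binet identities cleanly.
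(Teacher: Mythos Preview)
Your proof is correct. The paper does not actually give its own proof of this lemma; it simply states that the identities are easy to prove and cites Lehmer \cite{lehmer}, so your direct verification via the Binet formulas (together with $\alpha\beta=-1$ and $(\alpha-\beta)^2=D$) is exactly the kind of standard argument the paper is implicitly deferring to.
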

\begin{lemma}\label{lem3}
1) If $m,n\in\mathbb{N}$ such that $m\mid n$, then $u_m\mid u_n$.
\newline
\newline
2) For all $m,n\in\mathbb{N}$, we have $\gcd\left(u_m,u_n\right)=u_{\gcd(m,n)}$.
\end{lemma}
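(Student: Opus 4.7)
The plan is to deduce both parts of the lemma from a single addition formula together with a coprimality fact for consecutive terms. First I would establish the identity
\begin{equation*}
u_{m+n}=u_{m+1}u_n+u_mu_{n-1},
\end{equation*}
valid for all $m,n\geq 1$. The cleanest derivation is to start from the Binet formula: writing $\alpha^{m+n}-\beta^{m+n}=\alpha^m(\alpha^n-\beta^n)+\beta^n(\alpha^m-\beta^m)$, dividing by $\alpha-\beta$, and then symmetrizing via a second copy with $m,n$ swapped, using $\alpha\beta=-1$ to collect the cross terms. Alternatively one can induct on $n$ directly from the recurrence $u_{k+1}=Pu_k+u_{k-1}$.

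With the addition formula in hand, part 1 follows by induction on $j$ where $n=jm$. The base case $j=1$ is trivial, and for the step $u_{(j+1)m}=u_{jm+m}=u_{jm+1}u_m+u_{jm}u_{m-1}$, both summands being divisible by $u_m$ by the inductive hypothesis.

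Part 2 uses the additional observation that $\gcd(u_k,u_{k-1})=1$ for every $k\geq 1$. This is immediate by induction: $\gcd(u_k,u_{k-1})=\gcd(Pu_{k-1}+u_{k-2},u_{k-1})=\gcd(u_{k-2},u_{k-1})$, so the gcd is preserved down to $\gcd(u_1,u_0)=1$. Granting this, for $n>m\geq 1$ the addition formula modulo $u_m$ gives $u_n\equiv u_{n-m}u_{m-1}\pmod{u_m}$, whence $\gcd(u_m,u_n)=\gcd(u_m,u_{n-m}u_{m-1})=\gcd(u_m,u_{n-m})$. This is exactly one step of the Euclidean algorithm lifted to the sequence, so iterating mirrors the computation of $\gcd(m,n)$ on the indices, terminating at $u_{\gcd(m,n)}$ once one index hits $0$ (using $\gcd(u_k,u_0)=u_k$).

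The only mildly delicate point is pinning down the correct sign conventions in the addition formula: because the present sequence satisfies $u_n=Pu_{n-1}+u_{n-2}$ (so $Q=-1$ in the customary normalisation $u_n=Pu_{n-1}-Qu_{n-2}$), the formula differs by a sign from the one recorded in some standard references. Once that is reconciled, both parts of the lemma fall out with no further work, and indeed the paper cites \cite{lehmer} precisely so that this bookkeeping can be bypassed.
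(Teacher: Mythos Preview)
Your argument is correct and is the standard one: the addition formula $u_{m+n}=u_{m+1}u_n+u_mu_{n-1}$ (equivalently, by symmetry, $u_{m+n}=u_{n+1}u_m+u_nu_{m-1}$) together with $\gcd(u_k,u_{k-1})=1$ reduces both claims to a straightforward induction mirroring the Euclidean algorithm on the indices. The paper itself gives no proof of this lemma, simply citing \cite{lehmer} and remarking that such facts are easy to verify, so your write-up supplies exactly the routine verification the author chose to omit.
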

For the next results, we need some notation.
\begin{notation}
For a prime $p$ and a number $n$, we write $p^k\|n$ if $p^k$ is the highest power of $p$ dividing $n$.
\end{notation}
\begin{lemma}\label{lem2}
Let $a,k,m\in\mathbb{N}$ and $q$ be a prime such that $q^a\|u_m$ and $q\nmid k$. Then for any $l\geq 0$, we have $q^{a+l}|u_{kmq^l}$ with $q^{a+l}\|u_{kmq^l}$ if $q^a\neq 2$.
\end{lemma}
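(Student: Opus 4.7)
The plan is to split the claim into two stages: a cofactor step handling multiplication by $k$ with $\gcd(k,q)=1$, and a $q$-power step iterating $N\mapsto qN$ by induction on $l$. I will use the substitution $\alpha^r=\beta^r+(\alpha-\beta)u_r$ from the Binet formula throughout, together with $\alpha\beta=-1$, so that every power of $\beta$ is a unit in $\mathbb{Z}[\alpha]$.

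For the cofactor step, $m\mid km$ together with Lemma \ref{lem3}(1) immediately gives $q^a\mid u_{km}$. For the matching upper bound, I would expand
\begin{equation*}
\frac{u_{km}}{u_m}=\sum_{i=0}^{k-1}\alpha^{(k-1-i)m}\beta^{im}=\sum_{j=0}^{k-1}\binom{k}{j+1}\beta^{(k-1-j)m}\bigl((\alpha-\beta)u_m\bigr)^j
\end{equation*}
using the Binet substitution and the hockey-stick identity. Reducing modulo any prime of $\mathbb{Z}[\alpha]$ above $q$, only the $j=0$ term $k\beta^{(k-1)m}$ survives, and it is a unit since $q\nmid k$ and $\beta$ is a unit. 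Hence $q\nmid u_{km}/u_m$, so $q^a\|u_{km}$.

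For the inductive step it suffices to show: if $q^c\|u_N$ with $c\geq 1$ and $q^c\neq 2$, then $q^{c+1}\|u_{qN}$. The same expansion with $N,q$ in place of $m,k$ yields
\begin{equation*}
\frac{u_{qN}}{u_N}=\sum_{j=0}^{q-1}\binom{q}{j+1}\beta^{(q-1-j)N}\bigl((\alpha-\beta)u_N\bigr)^j.
\end{equation*}
For odd $q$, the $j=0$ term $q\beta^{(q-1)N}$ has $q$-adic valuation exactly $1$; the terms with $1\leq j\leq q-2$ have valuation at least $1+jc\geq c+1$ (from $q\mid\binom{q}{j+1}$ and $q^c\mid u_N$); and the $j=q-1$ term has valuation at least $(q-1)c\geq 2c\geq c+1$. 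Hence $v_q(u_{qN}/u_N)=1$ and the induction closes.

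The hard case is $q=2$, where the expansion only gives $u_{2N}/u_N=v_N=2\beta^N+(\alpha-\beta)u_N$, whose $2$-adic valuation is not visibly $1$. Here I would invoke the companion identity $v_N^2-Du_N^2=4(-1)^N$ from Lemma \ref{lem1}: the hypothesis $q^a\neq 2$ propagates to $c\geq 2$ throughout the induction, so $v_2(Du_N^2)=v_2(D)+2c\geq 4>2=v_2(4(-1)^N)$, forcing $v_2(v_N^2)=2$ and thus $v_2(v_N)=1$. Combining with $u_{2N}=u_Nv_N$ yields $v_2(u_{2N})=c+1$, completing the induction. The exclusion $q^a\neq 2$ is precisely what keeps $c\geq 2$ so that the $v_2(Du_N^2)$ term never competes with $v_2(4)=2$ in the companion identity.
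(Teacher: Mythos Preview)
The paper does not give its own proof of this lemma; it is quoted from Lehmer \cite{lehmer} as a standard fact about Lucas sequences. So there is nothing to compare against, and your writeup stands on its own.

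Your argument is essentially correct. The cofactor step is clean: the expansion via $\alpha^m=\beta^m+(\alpha-\beta)u_m$ and the hockey-stick identity are both right, and the conclusion $q\nmid u_{km}/u_m$ follows since $k\beta^{(k-1)m}$ is a unit in $\mathbb{Z}[\alpha]/q\mathbb{Z}[\alpha]$ (using $q\mathbb{Z}[\alpha]\cap\mathbb{Z}=q\mathbb{Z}$). For the odd-$q$ inductive step, your term-by-term analysis is fine once interpreted as a congruence modulo $q^2\mathbb{Z}[\alpha]$: all $j\geq 1$ terms lie in $q^2\mathbb{Z}[\alpha]$, the $j=0$ term is $q$ times a unit, and this forces $v_q(u_{qN}/u_N)=1$ for the rational integer $u_{qN}/u_N$. (Your ``valuation'' language is slightly informal, since $q\mathbb{Z}[\alpha]$ need not be prime, but the congruence argument underneath is valid.) The $q=2$ case via $v_N^2-Du_N^2=4(-1)^N$ is also correct: with $c\geq 2$ you get $v_2(Du_N^2)\geq 2c\geq 4>2$, hence $v_2(v_N)=1$.

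One small gap: the lemma asserts the bare divisibility $q^{a+l}\mid u_{kmq^l}$ \emph{unconditionally}, including when $q^a=2$, but your inductive step only treats the exact case $q^c\neq 2$. You should add one line for $q=2$, $c=1$: the same companion identity gives $v_2(v_N^2)\geq 2$ whenever $2\mid u_N$ (since both $Du_N^2$ and $4(-1)^N$ have $2$-adic valuation at least $2$), so $2\mid v_N$ and hence $v_2(u_{2N})\geq c+1$. This closes the divisibility induction in the excluded case.
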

We also have the following special case of $q=2$ of Lemma \ref{lem2}.
\begin{lemma}\label{lem9}
Let $m,j\in\mathbb{N}$.
\newline
\newline
i) Suppose $P$ is odd. Then $3\cdot 2^j\mid m$ if and only if $2^{j+2}\mid u_m$. 
\newline
\newline
ii) Suppose $P$ is even and $2^{t_1}\|P$. Then $2^j\mid m$ if and only if $2^{j+t_1-1}\mid u_m$.
\end{lemma}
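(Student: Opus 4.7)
My plan is to prove both parts of Lemma \ref{lem9} by combining a direct computation of $\nu_2(u_n)$ near the entry point of $2$ with the doubling identity $u_{2n}=u_n v_n$ from Lemma \ref{lem1}, and then promoting the exact valuations to all $m$ via Lemma \ref{lem2}.

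For part (i), I would first reduce the recurrence modulo $2$. Since $P$ is odd, $u_n\equiv u_{n-1}+u_{n-2}\pmod 2$, and the initial data $u_0=0$, $u_1=1$ give the period-$3$ pattern $0,1,1,0,1,1,\dots$, so $2$ enters at index $3$. A direct calculation then yields $u_3=P^2+1$ with $\nu_2(u_3)=1$ (using $P^2\equiv 1\pmod 8$) and $v_3=P(P^2+3)$ with $\nu_2(v_3)=2$ (using $P^2+3\equiv 4\pmod 8$), whence $\nu_2(u_6)=3$ by Lemma \ref{lem1}. For the induction step from $j$ to $j+1$, I would write $u_{3\cdot 2^{j+1}}=u_{3\cdot 2^j}\,v_{3\cdot 2^j}$ and invoke the identity $v_n^2-Du_n^2=4(-1)^n$ with $n=3\cdot 2^j$ even: since $D=P^2+4$ is odd and $\nu_2(u_n)\geq 2$ by induction, the right side $Du_n^2+4$ has $\nu_2$ exactly $2$, forcing $\nu_2(v_n)=1$. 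This gives $\nu_2(u_{3\cdot 2^j})=j+2$ in the relevant range, and Lemma \ref{lem2} (applied with $q=2$ and an odd cofactor) transports the conclusion to every $m$ with $3\cdot 2^j\mid m$.

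For part (ii), with $P$ even the sequence modulo $2$ is instead $0,1,0,1,\dots$, so $2$ enters at $n=2$ with $\nu_2(u_2)=\nu_2(P)=t_1$. I would iterate $u_{2n}=u_n v_n$ again: for odd $k$ one checks that $\nu_2(v_k)=t_1$ by reducing the $v$-recurrence modulo $2^{t_1+1}$ and inducting on $k$, while for even $k$ the identity $v_k^2=Du_k^2+4$ combined with $D=P^2+4\equiv 0\pmod 4$ and $u_k$ even gives $v_k^2\equiv 4\pmod 8$ and hence $\nu_2(v_k)=1$. Iterating the doubling identity starting from $\nu_2(u_2)=t_1$ then yields $\nu_2(u_{2^j})=t_1+j-1$ for every $j\geq 1$, and Lemma \ref{lem2} again extends this to all $m$ with $2^j\mid m$.

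The main obstacle is the borderline case $q^a=2$ in Lemma \ref{lem2}, where the lemma delivers only divisibility, not the exact power of $2$. In both parts this is precisely where the identity $v_n^2-Du_n^2=4(-1)^n$ has to be squeezed to pin $\nu_2(v_n)$ down to exactly $1$ rather than something larger. Careful bookkeeping of the smallest index in the induction, so that the machinery starts from a genuinely sharp valuation, is where I expect most of the actual work to live.
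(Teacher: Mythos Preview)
The paper does not actually prove this lemma: it is stated immediately after Lemma~\ref{lem2} with the remark that it is ``the special case of $q=2$'' of that lifting result, and no further argument is given. Your proposal is a correct fleshing-out of precisely that derivation. In particular, you correctly identify the one point where a bare appeal to Lemma~\ref{lem2} would not suffice, namely the exceptional clause $q^a=2$, and your plan to pin down $\nu_2(v_n)$ via the identity $v_n^2-Du_n^2=4(-1)^n$ (for even $n$ in part~(i) once $\nu_2(u_n)\geq 2$, and for even $k$ in part~(ii) using $4\mid D$ and $2\mid u_k$) together with a short mod-$2^{t_1+1}$ induction on the $v$-recurrence for odd $k$ in part~(ii), is exactly what is needed to close that gap. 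One small point worth making explicit in the write-up: to get the full ``if and only if'' you should also record that $\nu_2(u_{3k})=1$ for odd $k$ in part~(i) (this follows once you know $\nu_2(u_{6k})=3$ and $\nu_2(v_{3k})=2$, or directly from $u_3\equiv 2\pmod 4$ together with the periodicity mod $4$), so that $2^{j+2}\mid u_m$ genuinely forces $6\mid m$ and not merely $3\mid m$. With that addition, your argument is complete and is essentially the proof the paper is tacitly invoking.
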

Since we will be using Lemma \ref{lem9} throughout this paper, we will be setting $t_1:=\nu_2\left(P\right)$ regardless of whether $P$ is odd or even, i.e. $t_1$ is the integer such that $2^{t_1}$ is the highest power of $2$ dividing $P$. Also note that Lemmas \ref{lem3} and \ref{lem2} imply that for all natural numbers $N$ there exists $z(N)\in\mathbb{N}$ such that $N\mid u_n$ if and only if $z(N)\mid n$. We call $z(N)$ the \textit{order of appearance} of $N$ in the sequence $\left(u_n\right)_n$. In particular, this is true if $N=p$ is a prime and, in fact, we have the following result, due to Lucas \cite{lucas}.
\begin{lemma}{(Lucas (1878),\cite{lucas})}\label{lem18}
Let $p$ be a prime. Then $z(p)=p$ if $p\mid D$. Also, if $p\nmid D$ and $D$ is a quadratic residue $\pmod p$, then $z(p)\mid p-1$. If $p\nmid D$ and $D$ is not a quadratic residue $\pmod p$, then $z(p)\mid p+1$.   
\end{lemma}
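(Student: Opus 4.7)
The plan is to use the Binet formula $u_n = (\alpha^n - \beta^n)/(\alpha - \beta)$ and reduce modulo $p$, splitting according to whether $\sqrt{D} = \alpha - \beta$ vanishes in $\mathbb{F}_p$, lies in $\mathbb{F}_p^\times$, or only exists in the quadratic extension $\mathbb{F}_{p^2}$. For odd primes $p$ this trichotomy matches the three cases of the lemma exactly; the prime $p = 2$ is degenerate for quadratic residuacity and would be handled by direct inspection of the sequence $u_n \bmod 2$.

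When $p \mid D$, the Binet denominator $\alpha - \beta$ vanishes modulo $p$, so instead I would expand $(P \pm \sqrt{D})^n$ via the binomial theorem and then divide by $\sqrt{D}$, obtaining
\begin{equation*}
u_n = \frac{1}{2^{n-1}} \sum_{j \geq 0} \binom{n}{2j+1} P^{n-2j-1} D^j.
\end{equation*}
For an odd prime $p \mid D$ every term with $j \geq 1$ vanishes modulo $p$, leaving $u_n \equiv n\, P^{n-1} / 2^{n-1} \pmod p$. Since $p \mid P^2 + 4$ forces $p \nmid P$ (otherwise $p \mid 4$, i.e.\ $p = 2$), the factor $P^{n-1}/2^{n-1}$ is a unit modulo $p$, so $p \mid u_n$ iff $p \mid n$, giving $z(p) = p$.

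When $p \nmid D$, the element $\alpha - \beta$ is a unit in $\mathbb{F}_p$ (if $D$ is a quadratic residue) or in $\mathbb{F}_{p^2}$ (otherwise), so the Binet identity is valid modulo $p$ in the appropriate field. If $D$ is a quadratic residue, then $\alpha, \beta \in \mathbb{F}_p^\times$ (both nonzero since $\alpha\beta = -1$), and Fermat's little theorem gives $\alpha^{p-1} = \beta^{p-1} = 1$, hence $u_{p-1} \equiv 0 \pmod p$, so that $z(p) \mid p - 1$. If $D$ is a non-residue, then $\alpha$ and $\beta$ live in $\mathbb{F}_{p^2}$ and are swapped by the Frobenius $x \mapsto x^p$, because $(\sqrt{D})^p = D^{(p-1)/2}\sqrt{D} = -\sqrt{D}$ by Euler's criterion; thus $\alpha^p = \beta$, whence $\alpha^{p+1} = \alpha\beta = -1$ and similarly $\beta^{p+1} = -1$, so $u_{p+1} \equiv 0 \pmod p$ and $z(p) \mid p + 1$. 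The only real obstacle is the bookkeeping in the quadratic extension and the verification of the Frobenius action on $\sqrt{D}$; once those are in place, the divisibility conclusions are immediate from the order-of-appearance property implicit in Lemma \ref{lem3}.
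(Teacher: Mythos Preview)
The paper does not supply its own proof of this lemma: it is stated as a classical result of Lucas (1878) and simply cited, so there is nothing to compare against at the level of argument. Your proposal is correct and is essentially the standard modern proof; the trichotomy via the Binet formula over $\mathbb{F}_p$ or $\mathbb{F}_{p^2}$, together with the Frobenius computation $(\sqrt{D})^p=-\sqrt{D}$ in the non-residue case, is exactly how this is usually done. The only loose end you flag yourself, namely $p=2$, is indeed handled by direct inspection: if $P$ is even then $2\mid D$ and $z(2)=2$, while if $P$ is odd then $2\nmid D$ and $z(2)=3\mid 2+1$, so the statement holds there as well.
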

Another interesting result we will be using on the prime factors of $u_n$ is due to Carmichael. First, we have the following definition.
\begin{definition}
A \textit{primitive prime factor} of $u_n$ (respectively, $v_n$) is a prime factor $p$ of $u_n$ (respectively, $v_n$) such that $p\nmid u_m$ (respectively, $p\nmid v_m$) for all $1\leq m<n$. 
\end{definition}
\begin{lemma}{(Carmichael (1913),\cite{carmichael})}\label{lem15}
If $n\neq 1,2,6$, then $u_n$ has a \textit{primitive prime factor}, except in the case of $n=12$ in the usual Fibonacci sequence $1,1,2,3,5,8,\ldots$.
\end{lemma}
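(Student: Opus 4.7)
The plan is to use the classical cyclotomic factorization of Lucas sequences. For $n\geq 1$ define the homogeneous cyclotomic polynomial evaluated at $(\alpha,\beta)$,
\[\Phi_n(\alpha,\beta):=\prod_{\substack{1\leq k\leq n\\\gcd(k,n)=1}}\left(\alpha-\zeta_n^k\beta\right),\qquad\zeta_n=e^{2\pi i/n}.\]
A Galois-invariance argument shows $\Phi_n(\alpha,\beta)\in\mathbb{Z}$, and grouping the $n$-th roots of unity by their exact order yields $\alpha^n-\beta^n=\prod_{d\mid n}\Phi_d(\alpha,\beta)$, whence $u_n=\prod_{d\mid n,\,d>1}\Phi_d(\alpha,\beta)$ after dividing by $\alpha-\beta=\Phi_1(\alpha,\beta)$.

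The heart of the proof is a structural dichotomy: any prime $p\mid\Phi_n(\alpha,\beta)$ either satisfies $z(p)=n$, in which case $p$ is automatically a primitive prime factor of $u_n$, or else $p\mid n$, in which case $p\,\|\,\Phi_n(\alpha,\beta)$. This follows by combining Lemmas \ref{lem2} and \ref{lem18} to pin down the exponent of $p$ in each $\Phi_d(\alpha,\beta)$. Since the latter case contributes at most $\prod_{p\mid n}p\leq n$ to $|\Phi_n(\alpha,\beta)|$, it suffices to prove
\[\left|\Phi_n(\alpha,\beta)\right|>n.\]
For this lower bound I would Möbius-invert,
\[\Phi_n(\alpha,\beta)=\prod_{d\mid n}(\alpha^d-\beta^d)^{\mu(n/d)},\]
and use $\alpha\beta=-1$ together with $\alpha>3$ (a consequence of $P\geq 3$) to estimate $|\alpha^d-\beta^d|=\alpha^d(1+O(\alpha^{-2d}))$. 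Distributing the exponents $\mu(n/d)\in\{-1,0,1\}$ and using $\sum_{d\mid n}d\,\mu(n/d)=\varphi(n)$ yields an inequality of the shape
\[\log\left|\Phi_n(\alpha,\beta)\right|\geq\varphi(n)\log\alpha-C\tau(n),\]
with $C$ an absolute constant and $\tau$ the divisor function. Since the first term grows essentially linearly in $n$ while $\tau(n)=n^{o(1)}$, this comfortably exceeds $\log n$ past a small explicit cutoff $n\geq n_0$.

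The main obstacle is the finite residual range $n<n_0$. The values $n=1,2,6$ are genuinely exceptional since $\varphi(n)\in\{1,1,2\}$ is too small for any asymptotic bound to force $|\Phi_n(\alpha,\beta)|>n$, and one verifies directly that $u_1,u_2,u_6$ admit no primitive prime factor for any $P\geq 3$. The remaining small $n$ must be handled by writing $\Phi_n(\alpha,\beta)$ as an explicit integer polynomial in $P$ and checking case by case that its value has a prime factor exceeding $n$ for every $P\geq 3$. The $n=12$ Fibonacci exception recorded in the statement stems from $P=1$, where $\alpha<2$ prevents the asymptotic lower bound from closing and a manual check of $\Phi_{12}(\alpha,\beta)=F_{12}/F_6F_4\cdot F_2=\cdots$ exhibits the sole failure; for $P\geq 3$ this exception does not arise, but the lemma is recorded in its classical generality.
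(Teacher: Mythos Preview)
The paper does not prove this lemma at all; it is quoted as a classical result of Carmichael with only a citation to \cite{carmichael}. Your sketch follows the standard cyclotomic--factor approach that Carmichael himself used, and the outline---the dichotomy for primes dividing $\Phi_n(\alpha,\beta)$, the lower bound $\log|\Phi_n(\alpha,\beta)|\geq\varphi(n)\log\alpha-O(\tau(n))$ via M\"obius inversion, and a finite check for small $n$---is essentially correct and is how the result is proved in the literature.

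There is, however, a factual slip in your discussion of the exceptional indices. You write that ``$u_1,u_2,u_6$ admit no primitive prime factor for any $P\geq 3$,'' but this is false: $u_2=P$ always has a primitive prime factor once $P\geq 2$ (any prime dividing $P$), and for $P=4$ one computes $u_6=2^2\cdot 17\cdot 19$ with $19$ primitive. The indices $n=1,2,6$ appear in Carmichael's statement because for \emph{some} Lucas sequences (e.g.\ $P=1$ for $n=2$ and $n=6$; also $P=3$ gives $u_6=2^3\cdot 3^2\cdot 5$ with no primitive prime) the primitive prime factor can fail, not because it fails for every $P$. This does not damage the logical structure of your argument---for those $n$ nothing needs to be shown---but the sentence as written is incorrect and should be rephrased. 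A second minor point: the dichotomy ``$z(p)=n$ or $p\mid n$ with $p\,\|\,\Phi_n$'' requires separate handling of $p=2$ and of primes dividing the discriminant $D$, which your appeal to Lemmas~\ref{lem2} and~\ref{lem18} does not quite cover; in a full write-up these edge cases need a line or two each.
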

We can also see from Lemma \ref{lem2} that if $p^{e_p}$ is the highest power of $p$ that divides $u_{z(p)}$ for $p\geq 3$, then for any $\lambda\geq 0$, we have that $p^{e_p+\lambda}\mid u_n$ if and only if $z(p)p^{\lambda}\mid n$. From Lemma \ref{lem18} and this fact, we can also derive the following lemma.
\begin{lemma}\label{lem4}
For all odd primes $p$ such that $p\nmid P^2+4$, we have
\begin{equation*}
e_p\leq\frac{(p+1)\log\alpha+0.2}{2\log p}.
\end{equation*}
\end{lemma}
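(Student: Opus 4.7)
The plan is to leverage Lemma \ref{lem1}'s identity $v_n^2 - Du_n^2 = 4(-1)^n$ together with Lucas's bound from Lemma \ref{lem18}, which gives $z(p) \leq p+1$. Write $q := z(p)$. Since $p^{e_p} \| u_q$ and $p \nmid D$, squaring the relation yields $p^{2e_p} \mid Du_q^2 = v_q^2 - 4(-1)^q$. Because $v_0 = 2$ but $v_q \geq v_1 = P \geq 3 > 2$ for all $q \geq 1$, and because $v_q^2 = -4$ has no real solution, the right-hand side is a nonzero positive integer and we obtain the key inequality
\begin{equation*}
p^{2e_p} \leq |v_q^2 - 4(-1)^q|.
\end{equation*}

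I would then split on the parity of $q$. If $q$ is even, I would factor $v_q^2 - 4 = (v_q - 2)(v_q + 2)$; since $\gcd(v_q - 2, v_q + 2) \mid 4$ and $p$ is odd, the full prime power $p^{2e_p}$ must divide a single factor, giving $p^{2e_p} \leq v_q + 2 \leq \alpha^q + \alpha^{-q} + 2$. Combined with $q \leq p+1$, this produces $p^{2e_p} \leq \alpha^{p+1} + 3$. If $q$ is odd, then I only have $p^{2e_p} \leq v_q^2 + 4 \leq (\alpha^q + \alpha^{-q})^2 + 4$, which looks twice as weak — but here I would use the elementary observation that an odd divisor of the even number $p - 1$ or $p + 1$ must satisfy $q \leq (p \pm 1)/2 \leq (p+1)/2$. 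Thus $\alpha^{2q} \leq \alpha^{p+1}$, and again $p^{2e_p} \leq \alpha^{p+1} + 7$.

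In either parity, $p^{2e_p} \leq \alpha^{p+1} + 7$. Since $P \geq 3$ and $p \geq 3$ give $\alpha^{p+1} \geq \alpha^4 > 100$, one has $\log(1 + 7\alpha^{-(p+1)}) < 0.2$, and taking logarithms yields $2e_p \log p \leq (p+1)\log\alpha + 0.2$, which rearranges to the claim. The main obstacle is the odd-$q$ case: the naive bound $q \leq p+1$ would lose a factor of $2$ in the exponent, and the clean resolution is the arithmetic remark that an odd divisor of an even integer is at most half of it. Once this is noted, verifying that the accumulated constants fit inside the stated $0.2$ is a routine numerical check, and the two cases merge into the single stated inequality.
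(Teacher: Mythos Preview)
Your argument is correct, but it takes a different route from the paper's proof. The paper does not work at $q=z(p)$ directly; instead it passes to $u_{p+\epsilon}$ (with $\epsilon=\pm1$ chosen so that $z(p)\mid p+\epsilon$), uses the factorisation $u_{p+\epsilon}=u_{(p+\epsilon)/2}\,v_{(p+\epsilon)/2}$ from Lemma~\ref{lem1}, and observes that an odd prime $p$ cannot divide both factors (again by $v^2-Du^2=\pm4$). Hence $p^{e_p}$ is bounded by $\max\{u_{(p+\epsilon)/2},v_{(p+\epsilon)/2}\}<\alpha^{(p+1)/2}+1$, and the bound drops out immediately with no case analysis. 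The factor $2$ in the denominator comes for free from halving the index, since $p+\epsilon$ is even.

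Your approach instead stays at $q=z(p)$, bounds $p^{2e_p}\le|v_q^2\mp4|$, and then recovers the factor $2$ in two different ways according to the parity of $q$: for even $q$ by the coprimality of $v_q-2$ and $v_q+2$ modulo the odd prime $p$, and for odd $q$ by the pleasant remark that an odd divisor of the even number $p\pm1$ is at most $(p+1)/2$. Both cases then merge into $p^{2e_p}\le\alpha^{p+1}+7$, and the numerics go through. This is a valid and self-contained argument; the paper's version is simply more uniform, avoiding the parity split by doing the halving at the level of indices rather than at the level of the quadratic identity.
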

\begin{proof}
Since $2<p$ and $p\nmid P^2+4$, we have $z(p)\mid p+\epsilon$ where $\epsilon=\pm 1$. By Lemma \ref{lem1}, we have
\begin{equation*}
p^{e_p}\mid u_{p+\epsilon}=u_{(p+\epsilon)/2}v_{(p+\epsilon)/2}.
\end{equation*}
Since $p$ is odd, it cannot divide both $u_{(p+\epsilon)/2}$ and $v_{(p+\epsilon)/2}$ by Lemma \ref{lem1}. So either $p^{e_p}\mid u_{(p+\epsilon)/2}$ or $p^{e_p}\mid v_{(p+\epsilon)/2}$. In the former case, we have
\begin{equation*}
p^{e_p}\leq u_{(p+\epsilon)/2}<\alpha^{(p+\epsilon)/2}
\end{equation*}
and in the latter case, we have
\begin{equation*}
p^{e_p}\leq v_{(p+\epsilon)/2}<\alpha^{(p+\epsilon)/2}+1.
\end{equation*}
So, in either case, we have $p^{e_p}<\alpha^{(p+1)/2}+1$. Then we have
\begin{align*}
e_p\log p&<\log\left(\alpha^{(p+1)/2}+1\right)\\
&<\log\left(\alpha^{(p+1)/2}\right)+\log\left(1+\frac{1}{\alpha^{(p+1)/2}}\right)\\
&\leq\log\left(\alpha^{(p+1)/2}\right)+\log\left(1+\alpha^{-2}\right)\\
&<\frac{(p+1)\log(\alpha)}{2}+0.1,
\end{align*}
from which the result follows.
\end{proof}
We also require a result from prime number theory. We denote by $\omega(n)$ the number of distinct prime factors of $n$. Parts 1), 2), and 3) of the following lemma are (3.13), (3.29), and (3.41) in \cite{rosser} respectively, while the last part is Th\'{e}or\'{e}me 13 in \cite{robin}.
\begin{lemma}\label{lem5}
Let $p_1<p_2<\ldots$ be the sequence of prime numbers. We have the following.
\newline
\newline
1) For all $n\geq 6$, we have $p_n<n\left(\log n+\log\log n\right)$.
\newline
\newline
2) For all $x\geq 2861$, we have
\begin{equation*}
\prod_{p\leq x}\left(1+\frac{1}{p-1}\right)<1.79\log x\left(1+\frac{1}{2(\log x)^2}\right).
\end{equation*}
\newline
\newline
3) For all $n\geq 3$, we have
\begin{equation*}
\varphi(n)>\frac{n}{1.79\log\log n+\displaystyle\frac{2.5}{\log\log n}}.
\end{equation*}
\newline
\newline
4) For all $n\geq 26$, we have
\begin{equation*}
\omega(n)<\frac{\log n}{\log\log n-1.1714}.
\end{equation*}
\end{lemma}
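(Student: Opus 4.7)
Looking at the statement of Lemma \ref{lem5}, I recognise all four inequalities as classical explicit estimates from analytic number theory, none of which is original to this paper; the author has already pinpointed each as a specific equation or theorem in \cite{rosser} or \cite{robin}. My plan is therefore to cite those references directly: parts 1), 2), and 3) appear as (3.13), (3.29), and (3.41) of Rosser and Schoenfeld, and part 4) is Th\'{e}or\`{e}me 13 of Robin. This is the honest ``proof'', since redoing the numerical bookkeeping would be a substantial project in its own right and would add nothing new.

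If pressed to sketch the underlying arguments, I would note that everything rests on explicit bounds for $\pi(x)$ and for Chebyshev's function $\theta(x)=\sum_{p\leq x}\log p$, which are the technical heart of \cite{rosser}. For 1), I would invert an explicit lower bound of the shape $\pi(x)\geq x/\log x$ and combine with the asymptotic $p_n\sim n\log n$, then verify the threshold $n\geq 6$ by hand from a table of the first few primes. For 2), I would use the identity $1+\frac{1}{p-1}=\left(1-\frac{1}{p}\right)^{-1}$, so the product equals $\prod_{p\leq x}\left(1-\frac{1}{p}\right)^{-1}$; by Mertens' third theorem this is $e^{\gamma}\log x(1+o(1))$, and since $e^{\gamma}=1.78107\ldots<1.79$, the slack is absorbed into the $1/(2(\log x)^2)$ correction provided $x\geq 2861$.

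Part 3) would follow from part 2) by writing $\varphi(n)/n=\prod_{p\mid n}\left(1-\frac{1}{p}\right)$; the worst case is when $n$ is built from the smallest primes, so the bound of part 2) transfers directly once one checks the range of small $n$, where $\omega(n)$ is tiny and a direct computation suffices. For 4), the key observation is that $n\geq p_1p_2\cdots p_{\omega(n)}=\exp\bigl(\theta(p_{\omega(n)})\bigr)$, so $\log n\geq\theta(p_{\omega(n)})\geq(1-\varepsilon)p_{\omega(n)}$ for explicit $\varepsilon$; combining with the easy bound $p_k\geq k\log k$ from part 1) and optimising in $k=\omega(n)$ yields an inequality of the required shape, with the constant $1.1714$ tuned to make the bound hold uniformly for $n\geq 26$.

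The main obstacle is not conceptual but numerical: pinning down the explicit constants $1.79$, $2.5$, $1.1714$ and the thresholds $n\geq 6$, $x\geq 2861$, $n\geq 3$, $n\geq 26$ requires careful control of error terms together with exhaustive checking in small ranges. Since this work has been done very carefully in the cited references, I would stop at the citation rather than redo it, and move on to the substantive analysis of the Lucas sequence in the next sections.
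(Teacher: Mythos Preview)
Your proposal is correct and matches the paper's approach exactly: the paper does not prove Lemma~\ref{lem5} at all but simply attributes parts 1), 2), 3) to (3.13), (3.29), (3.41) of \cite{rosser} and part 4) to Th\'{e}or\`{e}me~13 of \cite{robin}, precisely as you do. Your additional sketches of the underlying arguments are accurate extra commentary, but the paper itself stops at the citations.
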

Throughout the paper, we will let $\mathcal{P}_n=\{p:z(p)=n\}$, $\mathcal{P}'_n=\{p>\alpha^4:z(p)=n\}$, 
\begin{equation*}
S_n:=\sum_{p\in\mathcal{P}_n}\frac{1}{p},
\end{equation*}
and
\begin{equation*}
T_n:=\sum_{p\in\mathcal{P}'_n}\frac{1}{p}.
\end{equation*}
We have the following two lemmas involving $T_n$ and $S_n$.
\begin{lemma}\label{lem6}
If $n>\alpha^2$, then
\begin{equation*}
T_n<\frac{2\log n}{n}.
\end{equation*}
\end{lemma}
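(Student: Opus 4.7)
The plan is to combine two structural facts about $\mathcal{P}_n'$. First, every $p \in \mathcal{P}_n'$ divides $u_n$, so $\prod_{p \in \mathcal{P}_n'} p \leq u_n < \alpha^{n-1}$. Second, by Lemma \ref{lem18}, each such prime either divides $D$ (forcing $p = z(p) = n$) or satisfies $p \equiv \pm 1 \pmod n$.

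I would begin by ruling out the possibility $p = n$. If a prime $p = n$ lay in $\mathcal{P}_n'$, then we would need $n \mid D = P^2 + 4$ and $n > \alpha^4$; but $\alpha > P$ (since $\sqrt{P^2+4} > P$) gives $\alpha^4 > P^4 \geq P^2 + 4 \geq n$ for $P \geq 3$, a contradiction. Hence every $p \in \mathcal{P}_n'$ is $\equiv \pm 1 \pmod n$. Sorting them in increasing order $p_1 < p_2 < \cdots < p_k$ and matching against the sorted sequence $n-1, n+1, 2n-1, 2n+1,\ldots$ of positive integers congruent to $\pm 1 \pmod n$ yields $p_j \geq jn/2$ for all $j \geq 1$, as soon as $n \geq 2$.

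Next I would bound $k$. Since each $p_j > \alpha^4$, the product bound yields $\alpha^{4k} < \prod p_j \leq u_n < \alpha^{n-1}$, hence $k < (n-1)/4$. Combining this with the per-index estimate gives
\begin{equation*}
T_n = \sum_{j=1}^k \frac{1}{p_j} \leq \frac{2}{n}\sum_{j=1}^k \frac{1}{j} \leq \frac{2}{n}\bigl(1 + \log k\bigr) < \frac{2}{n}\bigl(\log n + 1 - \log 4\bigr) < \frac{2\log n}{n},
\end{equation*}
with the last inequality holding because $\log 4 > 1$.

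The main obstacle is dispatching the case $p = n$: the inequality $\alpha^4 > D$ is not immediate and relies on $\alpha > P$ together with $P \geq 3$. Once that wrinkle is handled, the rest is a clean harmonic-sum comparison; in fact the hypothesis $n > \alpha^2$ does not appear to be needed in the final estimate itself, and is presumably included for convenient use in later lemmas.
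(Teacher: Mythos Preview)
Your argument is correct and in fact cleaner than the paper's. The paper proceeds by splitting into the ranges $n>\alpha^4$ and $\alpha^2<n\le\alpha^4$. In the first range it bounds $l_n:=|\mathcal P_n'|$ via $(n-1)^{l_n}\le u_n<\alpha^{n-1}$, obtaining $l_n<\frac{n\log\alpha}{\log n}$, and then controls $T_n\le 2\sum_{1\le l\le l_n}\frac{1}{nl-1}$ by an integral comparison with several auxiliary inequalities; in the second range it runs a separate integral argument that exploits $p>\alpha^4$ directly. Your route replaces all of this with the single coarse count $\alpha^{4k}<\alpha^{n-1}$, giving $k<(n-1)/4$, together with the elementary matching $p_j\ge jn/2$ and the harmonic bound $\sum_{j\le k}1/j\le 1+\log k$. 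Because $\log 4>1$, the resulting constant is already good enough, so no case split or integral estimate is needed. Your observation that the hypothesis $n>\alpha^2$ is never actually invoked is also accurate: the paper uses it to organise its two cases, but your proof shows the inequality $T_n<\frac{2\log n}{n}$ holds for all $n\ge 2$ (the case $\mathcal P_n'=\varnothing$ being trivial, and otherwise $k\ge 1$ forces $n\ge 6$). The one place to be slightly careful is the disposal of $p\mid D$, but your check that $\alpha>P$ and $P^4\ge P^2+4$ for $P\ge 3$ gives $\alpha^4>D$, so any $p\in\mathcal P_n'$ is indeed coprime to $D$ and hence $\equiv\pm1\pmod n$ by Lemma~\ref{lem18}.
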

\begin{proof}
First, suppose that $n>\alpha^4$. For any $p\in \mathcal{P}'_n$, we have $n\mid p-1$ or $n\mid p+1$ since $p>\alpha^4>D$ and $n>\alpha^4>2$. Letting $l_n$ be the size of the set $\mathcal{P}'_n$, we have
\begin{equation*}
(n-1)^{l_n}\leq\prod_{p\in\mathcal{P}'_n}p\leq u_n<\alpha^{n-1},
\end{equation*}
so that
\begin{equation}\label{eqn35}
l_n<\frac{(n-1)\log\alpha}{\log(n-1)}<\frac{n\log\alpha}{\log n}.
\end{equation}
Therefore, we have the following:
\begin{align*}
T_n&\leq 2\sum_{1\leq l\leq l_n}\frac{1}{nl-1}\\
&=\frac{2}{n}\sum_{1\leq l\leq l_n}\frac{1}{l-\displaystyle\frac{1}{n}}\\
&\leq\frac{2}{n}\left(\int_{1-\frac{1}{n}}^{l_n-\frac{1}{n}}\frac{dx}{x}+\frac{1}{1-\displaystyle\frac{1}{n}}\right)\\
&=\frac{2}{n}\left(\log\left(l_n-\frac{1}{n}\right)-\log\left(1-\frac{1}{n}\right)+\frac{n}{n-1}\right)\\
&<\frac{2}{n}\left(\log\left(\frac{l_n}{1-\displaystyle\frac{1}{n}}\right)+\frac{n}{n-1}\right)\\
&<\frac{2}{n}\log\left(\frac{n(\log\alpha) e^{\frac{n}{n-1}}}{\left(1-\displaystyle\frac{1}{n}\right)\log n}\right)\\
&<\frac{2}{n}\log\left(\frac{n e^{\frac{n}{n-1}}}{\left(4-\displaystyle\frac{4}{n}\right)}\right).
\end{align*}
Since $n>\alpha^4>119$, we have
\begin{equation*}
e^{\frac{n}{n-1}}<e^{\frac{119}{118}}<4-\frac{4}{119}<4-\frac{4}{n}.
\end{equation*}
The result follows for $n>\alpha^4$. Now assume that $\alpha^2<n\leq \alpha^4$. From \eqref{eqn35}, we have $l_n<\frac{n}{4}$. Therefore, we deduce the following:
\begin{align*}
T_n&\leq 2\sum_{0\leq l\leq l_n-1}\frac{1}{\alpha^4+nl}\\
&\leq\frac{2}{n}\left(\int_0^{\frac{n}{4}-1}\frac{dx}{\displaystyle\frac{\alpha^4}{n}+x}+\frac{n}{\alpha^4}\right)\\
&=\frac{2}{n}\left(\log\left(\frac{\alpha^4}{n}+\frac{n}{4}-1\right)-\log\left(\frac{\alpha^4}{n}\right)\right)+\frac{2}{\alpha^4}\\
&\leq\frac{2}{n}\left(\log\left(1+\frac{n^2}{4\alpha^4}-\frac{n}{\alpha^4}\right)+1\right)\\
&=\frac{2}{n}\log\left(e+\frac{n^2e}{4\alpha^4}-\frac{ne}{\alpha^4}\right)
\end{align*}
Since $n>\alpha^2>4$, we have
\begin{equation*}
e+\frac{n^2e}{4\alpha^4}-\frac{ne}{\alpha^4}=e+\frac{n}{\alpha^4}\left(\frac{en}{4}-e\right)<e+\frac{en}{4}-e<n.
\end{equation*}
The result follows.
\end{proof}
\begin{lemma}\label{lem16}
If $n\geq 40$, then
\begin{equation*}
S_n<\frac{3.204}{n}+\frac{1}{n\log n}+\frac{4\log\log n}{\varphi(n)}+\frac{4\log\log\alpha}{\varphi(n)\log n}.
\end{equation*}
\end{lemma}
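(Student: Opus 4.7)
The plan is to bound $S_n$ by exploiting the strong constraint that Lemma \ref{lem18} places on the primes $p\in\mathcal{P}_n$: either $p\mid D$, in which case $z(p)=p$ forces $p=n$ and contributes at most $1/n$ to $S_n$, or $n\mid p\pm 1$. Therefore
\[
S_n \;\le\; \frac{1}{n} \;+\; \sum_{\substack{p\in\mathcal{P}_n\\ p\equiv\pm 1\,(\mathrm{mod}\,n)}}\frac{1}{p},
\]
and the task reduces to estimating the right-hand sum over primes in two arithmetic progressions modulo $n$.

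I would split that sum at a cutoff $Y$ of order roughly $n$. For primes $p\le Y$, I would combine Lemma \ref{lem5}(2) with an elementary Brun--Titchmarsh-type sieve for the two residue classes $\pm 1\pmod n$, obtaining a Mertens-in-AP estimate of the shape
\[
\sum_{\substack{p\le Y\\ p\equiv\pm 1\,(\mathrm{mod}\,n)}}\frac{1}{p} \;\le\; \frac{4\log\log Y}{\varphi(n)} + \text{(smaller terms)},
\]
where one factor of $2$ comes from the two residue classes and a second factor of $2$ from the Brun--Titchmarsh constant; the numerical constants $1.79$ and $1.79^2\approx 3.204$ from Lemma \ref{lem5}(2) then propagate into the coefficients of the final bound. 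For primes $p>Y$ in $\mathcal{P}_n$, the product bound $\prod_{p\in\mathcal{P}_n}p \mid u_n<\alpha^{n-1}$, provided by Lemma \ref{lem3} together with the Binet inequality, shows that there are at most $(n-1)\log\alpha/\log Y$ such primes, each greater than $Y$, so they contribute at most $(n-1)\log\alpha/(Y\log Y)$ to $S_n$.

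Choosing $Y$ of size comparable to $n$ balances the two ranges: the low-range term becomes $4\log\log n/\varphi(n)$, the tail becomes $O(1/(n\log n))$, and the $p=n$ exception supplies the isolated $1/n$, which together with the $3.204/n$ constant from the Mertens-type input produces the first two terms in the statement. The extra correction $4\log\log\alpha/(\varphi(n)\log n)$ emerges when one transfers the effective truncation from $Y\asymp n$ to the actual largest admissible prime $\le\alpha^{n-1}$, via $\log\log\alpha^{n-1}=\log(n-1)+\log\log\alpha$; it is only significant in the regime where $n$ is comparable to $\alpha^4$, i.e.\ at the bottom of the admissible range.

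The main obstacle I foresee is making the Mertens-in-AP bound fully \emph{explicit} with the specific constant $4$ in $4\log\log n/\varphi(n)$: this requires careful Abel summation against the Brun--Titchmarsh inequality $\pi(X;n,\pm 1)\le 2X/(\varphi(n)\log(X/n))$ for $X>2n$, together with a separate treatment of the boundary near $X=n$ where the sieve bound degenerates. A secondary difficulty is handling the small regime $40\le n\le\alpha^4$ uniformly with the regime $n>\alpha^4$ (where Lemma \ref{lem6} already does most of the work): in the small-$n$ regime the set $\mathcal{P}_n\setminus\mathcal{P}'_n$ is nonempty and its contribution must be directly bounded using Lemma \ref{lem18} and absorbed into the $\log\log\alpha$ term.
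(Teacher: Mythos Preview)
Your overall strategy coincides with the paper's: reduce to the two residue classes $\pm 1\pmod n$ via Lemma~\ref{lem18}, control the middle range by Abel summation against the Brun--Titchmarsh inequality, and bound the tail using $\prod_{p\in\mathcal{P}_n}p\le u_n<\alpha^{n-1}$. The ingredients are right, but the numerical plan has a genuine gap in the choice of cutoff.

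With a single cutoff $Y\asymp n$ your tail estimate $(n-1)\log\alpha/(Y\log Y)$ is of order $\log\alpha/\log n$, not $O(1/(n\log n))$; since $\alpha$ may be arbitrarily large this is fatal. Conversely, the Abel-summation bound you quote as $4\log\log Y/\varphi(n)$ is actually $4\log\log(Y/n)/\varphi(n)$ (the Brun--Titchmarsh denominator is $\log(X/n)$), so with $Y\asymp n$ the low-range main term is only $O(1/\varphi(n))$ and cannot produce $4\log\log n/\varphi(n)$. The paper resolves this by taking the upper cutoff at $Y=n^2\log\alpha$: then the tail is $|\mathcal{P}_n|/Y\le (n\log\alpha/\log n)/(n^2\log\alpha)=1/(n\log n)$, while the Abel integral gives $4\log\log(Y/n)/\varphi(n)=4\log\log(n\log\alpha)/\varphi(n)$, and the inequality $\log\log(n\log\alpha)<\log\log n+\log\log\alpha/\log n$ is exactly where the last term of the lemma comes from (not from ``transferring to $\alpha^{n-1}$'').

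Two smaller corrections. The constant $3.204$ is not $1.79^2$; it arises from the explicit boundary treatment at the lower cutoff $4n$, namely bounding $\sum_{p\in\mathcal{A}_n}1/p-(\pi(4n;n,1)+\pi(4n;n,n-1))/(4n)$ by the finite sum $\frac{1}{n-1}+\frac{1}{n}+\frac{1}{n+1}+\frac{1}{2n-1}+\cdots+\frac{1}{4n-1}-\frac{7}{4n}<3.204/n$ for $n\ge 40$. Also, the paper does not need a separate ``small-$n$ regime $40\le n\le\alpha^4$'' analysis here: Lemma~\ref{lem6} is not invoked, and the argument is uniform in $n\ge 40$.
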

\begin{proof}
We follow the proofs in \cite{chen} and \cite{faye3}. For convenience to the reader, we give the details here. As in \eqref{eqn35}, we derive
\begin{equation*}
\left|\mathcal{P}_n\right|\leq\frac{n\log\alpha}{\log n}. 
\end{equation*}
Let $\mathcal{A}_n:=\left\{p\leq 4n:n\mid p-1, n\mid p+1,\text{ or }n=p\right\}$. We divide the sum up in the definition of $S_n$ as follows:
\begin{align*}
S_n&=\sum_{\substack{p\leq 4n\\z(p)=n}}\frac{1}{p}+\sum_{\substack{4n<p\leq n^2\log\alpha\\z(p)=n}}\frac{1}{p}+\sum_{\substack{p>n^2\log\alpha\\z(p)=n}}\frac{1}{p}\\
&\leq\sum_{p\in\mathcal{A}_n}\frac{1}{p}+\sum_{\substack{4n<p\leq n^2\log\alpha\\n\mid p-1\text{ or }n\mid p+1}}\frac{1}{p}+\sum_{\substack{p>n^2\log\alpha\\p\in\mathcal{P}_n}}\frac{1}{p}\\
&:=Y_1+Y_2+Y_3.
\end{align*}
We first estimate the sum $Y_2$. Let $\pi(X;n,1)$ and $\pi(X;n,n-1)$ denote the number of primes $p\leq X$ such that $n\mid p-1$ or $n\mid p+1$, respectively. By the Brun-Titchmarsh Theorem from Montgomery and Vaughan \cite{montgomery}, we have
\begin{equation*}
\pi(X;n,1),\pi(X;n,n-1)<\frac{2X}{\varphi(n)\log\left(X/n\right)},
\end{equation*}
valid for all $X>n\geq 2$. Therefore, by Abel Summation, we derive the following:
\begin{align*}
Y_2&\leq\int_{4n}^{n^2\log\alpha}\frac{d\pi(x;n,1)}{x}+\int_{4n}^{n^2\log\alpha}\frac{d\pi(x;n,n-1)}{x}\\
&=\frac{\pi(x;n,1)+\pi(x;n,n-1)}{x}\bigg|_{x=4n}^{n^2\log\alpha}+\int_{4n}^{n^2\log\alpha}\frac{\pi(x;n,1)+\pi(x;n,n-1)}{x^2}dx\\
&\leq\frac{4}{\varphi(n)\log(n\log\alpha)}-\frac{\pi(4n;n,1)+\pi(4n;n,n-1)}{4n}+\frac{4}{\varphi(n)}\int_{4n}^{n^2\log\alpha}\frac{dx}{x\log\left(x/n\right)}\\
&=\frac{4\log\log(n\log\alpha)}{\varphi(n)}-\frac{\pi(4n;n,1)+\pi(4n;n,n-1)}{4n}+\frac{4}{\varphi(n)}\left(\frac{1}{\log(n\log\alpha)}-\log\log 4\right).
\end{align*}
Since $n\geq 40$ and $\alpha>3$, we have
\begin{equation*}
\frac{1}{n\log\alpha}<\frac{1}{\log(40)}<\log\log 4,
\end{equation*}
so that
\begin{equation*}
Y_1+Y_2\leq\frac{4\log\log(n\log\alpha)}{\varphi(n)}-\frac{\pi(4n;n,1)+\pi(4n;n,n-1)}{4n}+\sum_{p\in\mathcal{A}_n}\frac{1}{p}.
\end{equation*}
Notice that
\begin{align*}
&\quad-\frac{\pi(4n;n,1)+\pi(4n;n,n-1)}{4n}+\sum_{p\in\mathcal{A}_n}\frac{1}{p}\\
&\leq\frac{1}{n-1}+\frac{1}{n}+\frac{1}{n+1}+\frac{1}{2n-1}+\frac{1}{2n+1}+\frac{1}{3n-1}+\frac{1}{3n+1}+\frac{1}{4n-1}-\frac{7}{4n}\\
&\leq\frac{1}{n}\left(\frac{n}{n-1}+2+\frac{n}{2n-1}+\frac{1}{2}+\frac{n}{3n-1}+\frac{1}{3}+\frac{n}{4n-1}-\frac{7}{4}\right)\\
&<\frac{3.204}{n}.
\end{align*}
Thus,
\begin{equation*}
Y_1+Y_2\leq\frac{3.204}{n}+\frac{4\log\log(n\log\alpha)}{\varphi(n)}.
\end{equation*}
For $Y_3$, we have
\begin{equation*}
Y_3<\frac{\left|\mathcal{P}_n\right|}{n^2\log\alpha}\leq\frac{1}{n\log n}.
\end{equation*}
Thus,
\begin{equation*}
S_n<\frac{3.204}{n}+\frac{1}{n\log n}+\frac{4\log\log(n\log\alpha)}{\varphi(n)}.
\end{equation*}
We deduce
\begin{align*}
\log\log(n\log\alpha)&=\log(\log n+\log\log\alpha)\\
&=\log\log n+\log\left(1+\frac{\log\log\alpha}{\log n}\right)\\
&<\log\log n+\frac{\log\log\alpha}{\log n}.
\end{align*}
Thus,
\begin{equation*}
S_n<\frac{3.204}{n}+\frac{1}{n\log n}+\frac{4\log\log n}{\varphi(n)}+\frac{4\log\log\alpha}{\varphi(n)\log n}.
\end{equation*}
\end{proof}
We also have the following result from \cite{luca6}.
\begin{lemma}\label{lem8}
We have
\begin{equation*}
\sum_{d\mid n}\frac{\log d}{d}<\left(\sum_{p\mid n}\frac{\log p}{p-1}\right)\frac{n}{\varphi(n)}.
\end{equation*}
\end{lemma}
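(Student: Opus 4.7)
The plan is to expand $\log d$ along the prime factorization of $d$ and swap summations so that the outer sum runs over primes $p\mid n$, then evaluate the resulting inner sum by splitting each divisor $d$ with $p\mid d$ into its $p$-part and a complementary part coprime to $p$.

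First, I would write $\log d=\sum_{p\mid d}\nu_p(d)\log p$ and interchange summations to obtain
\[
\sum_{d\mid n}\frac{\log d}{d}=\sum_{p\mid n}(\log p)\sum_{\substack{d\mid n\\ p\mid d}}\frac{\nu_p(d)}{d}.
\]
Next, for a fixed prime $p\mid n$ with $p^{a}\|n$, setting $m:=n/p^{a}$, every divisor $d$ of $n$ divisible by $p$ admits a unique representation $d=p^{b}e$ with $1\le b\le a$ and $e\mid m$, and therefore the inner sum factors as
\[
\left(\sum_{b=1}^{a}\frac{b}{p^{b}}\right)\frac{\sigma(m)}{m}.
\]

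Then I would bound both factors. For the $b$-sum I would extend the range to infinity, using the identity $\sum_{b=1}^{\infty}b/p^{b}=p/(p-1)^{2}$, a strict bound because $a$ is finite. The factor $\sigma(m)/m$ I would bound by its Euler product $\prod_{q\mid m}q/(q-1)$; since the primes dividing $m$ are exactly the primes dividing $n$ other than $p$, this product equals $\frac{p-1}{p}\cdot\frac{n}{\varphi(n)}$.

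Multiplying the two bounds, the cancellation $\frac{p}{(p-1)^{2}}\cdot\frac{p-1}{p}=\frac{1}{p-1}$ produces
\[
\sum_{\substack{d\mid n\\ p\mid d}}\frac{\nu_p(d)}{d}<\frac{1}{p-1}\cdot\frac{n}{\varphi(n)},
\]
and pulling $n/\varphi(n)$ outside the sum over $p\mid n$ (against the weights $\log p$) yields the stated inequality. The main obstacle is only the bookkeeping needed to see that the local sum $p/(p-1)^{2}$ and the missing Euler-product factor $(p-1)/p$ conspire to give exactly the constant $1/(p-1)$ demanded by the right-hand side, and that strictness (provided by $a<\infty$) is transmitted through the estimates.
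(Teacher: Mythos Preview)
Your argument is correct. The decomposition $\log d=\sum_{p\mid d}\nu_p(d)\log p$, the factorization of the inner sum as $\bigl(\sum_{b=1}^{a}b/p^{b}\bigr)\,\sigma(m)/m$, the bound $\sum_{b\ge1}b/p^{b}=p/(p-1)^{2}$, and the identity $\prod_{q\mid m}q/(q-1)=\tfrac{p-1}{p}\cdot n/\varphi(n)$ combine exactly as you say, and strictness comes from truncating the geometric-type series at the finite exponent $a$.

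As for comparison: the paper does not prove this lemma at all; it simply quotes it from \cite{luca6} (Luca, \textit{Publ.\ Math.\ Debrecen} 2001). Your write-up therefore supplies a self-contained elementary proof where the paper only gives a citation. The approach you use is the natural one and is essentially the same computation that underlies the cited result, so there is no substantive methodological difference to discuss---you have just filled in what the paper outsourced.
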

\begin{lemma}\label{lem}
If $\varphi\left(u_n\right)=u_m$ with $n>1$, then $m$ is even.
\end{lemma}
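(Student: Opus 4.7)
The strategy is to exploit that $\varphi(N)$ is even whenever $N \geq 3$, together with the precise description of $\nu_2(u_m)$ provided by Lemma~\ref{lem9}. Since $n > 1$ and $P \geq 3$, we have $u_n \geq u_2 = P \geq 3$, so $\varphi(u_n) \geq 2$ is even, and thus $u_m = \varphi(u_n)$ is even.

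If $P$ is even, Lemma~\ref{lem9}(ii) immediately gives $2 \mid m$, finishing this case.

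If $P$ is odd, the classical parity statement $2 \mid u_m \Leftrightarrow 3 \mid m$ (derived in the same spirit as Lemma~\ref{lem9}(i)) combined with the lemma itself shows that $\nu_2(u_m)$ takes values in $\{0, 1\} \cup \{3, 4, 5, \dots\}$; in particular, $\nu_2(u_m) = 2$ is impossible. It therefore suffices to prove $\nu_2(\varphi(u_n)) \geq 2$: this forces $\nu_2(u_m) \geq 3$, whence Lemma~\ref{lem9}(i) with $j = 1$ yields $6 \mid m$ and hence $m$ is even. To establish $\nu_2(\varphi(u_n)) \geq 2$, I would argue by contradiction, using the elementary identity
\[
\nu_2(\varphi(N)) = \max(0, \nu_2(N) - 1) + \sum_{\substack{p \mid N \\ p \text{ odd}}} \nu_2(p - 1),
\]
which shows that $\nu_2(\varphi(u_n)) \leq 1$ can only occur when $u_n \in \{1, 2, 4\}$, or $u_n = p^k$, or $u_n = 2 p^k$ for some prime $p \equiv 3 \pmod 4$. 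The values $u_n \in \{1, 2, 4\}$ are excluded since $u_2 = P \geq 3$ is odd (hence $\neq 4$) and $u_n \geq u_3 = P^2 + 1 \geq 10$ for $n \geq 3$.

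The main obstacle is eliminating the two remaining shapes $u_n = p^k$ and $u_n = 2 p^k$ with $p \equiv 3 \pmod 4$; I would handle both uniformly by a size comparison. In either case $\varphi(u_n)/u_n \geq (p - 1)/(2p) \geq 1/P$, where the last inequality uses $p \geq 3$ and $P \geq 3$; consequently $\varphi(u_n) \geq u_n/P \geq u_{n-1}$, with strict inequality coming from $u_n = P u_{n-1} + u_{n-2} > P u_{n-1}$ when $n \geq 3$ (the case $n = 2$ is handled directly from $u_{n-1} = 1$ and $\varphi(P) \geq 2$). Therefore $u_m = \varphi(u_n) > u_{n-1}$, and by monotonicity of $(u_k)_k$ this forces $m \geq n$, contradicting $u_m < u_n$.
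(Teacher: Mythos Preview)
Your proof is correct and follows essentially the same strategy as the paper: both reduce the $P$ odd case to ruling out the shapes $u_n\in\{4,q^\gamma,2q^\gamma\}$, and both dispatch these via the size comparison $\varphi(u_n)/u_n\geq 1/3\geq 1/P$ forcing $u_m>u_{n-1}$. The only place your write-up is thinner than the paper's is the claim that $\nu_2(u_m)\neq 2$ when $P$ is odd: Lemma~\ref{lem9}(i) as stated (with $j\geq 1$) together with $2\mid u_m\Leftrightarrow 3\mid m$ only pins $\nu_2(u_m)$ to $\{1,2\}$ when $m\equiv 3\pmod 6$, so you still need the explicit period-$6$ check modulo $4$ (as the paper does) to exclude the value $2$; this is routine but should be said.
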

\begin{proof}
Suppose that $\varphi\left(u_n\right)=u_m$ holds for some $n>m\geq 1$. Then we must have $m>1$. If $u_m$ is odd, then we must have $u_m=1$, but then $u_m\geq u_2=P\geq 3$. Therefore, $u_m$ is even. If $P$ is even, then Lemma \ref{lem9} implies that $u_m$ is even if and only if $m$ is even. Therefore, $m$ is even if $P$ is even. Now assume $P$ is odd. Again, Lemma \ref{lem9} implies that $u_m$ is even if and only if $3\mid m$. Therefore, $3\mid m$. Suppose for a contradiction that $m$ is odd. Then $m\equiv 3\pmod 6$. Analyzing the sequence $\left(u_m\right)_m$ modulo $4$ reveals that $u_6\equiv 0\pmod 4$ and $u_7\equiv 1\pmod 4$. We deduce that $u_m\equiv u_3\pmod 4$. Since $u_3=P^2+1$ and $P$ is odd, we have $u_3\equiv 2\pmod 4$, so that $u_m\equiv 2\pmod 4$. Thus, $u_n$ can have at most one odd prime factor and one of three cases must hold: $u_n=4$, $u_n=q^{\gamma}$, or $u_n=2q^{\gamma}$ where $\gamma\in\mathbb{N}$ and $q$ is an odd prime. If $u_n=4$, then $u_m=2$, but then $u_m\geq u_2\geq 3$, eliminating that case. If $u_n=q^{\gamma}$, then $u_m=(q-1)q^{\gamma-1}$, so that
\begin{equation*}
\frac{u_{n-1}}{u_n}\geq\frac{u_m}{u_n}=1-\frac{1}{q}\geq 1-\frac{1}{3}=\frac{2}{3}.
\end{equation*}
Thus $2u_n\leq 3u_{n-1}<u_n$, which can't hold, eliminating that case. If $u_n=2q^{\gamma}$, then we have $u_m=(q-1)q^{\gamma-1}$ and
\begin{equation*}
\frac{u_{n-1}}{u_n}\geq\frac{u_m}{u_n}=\frac{1}{2}-\frac{1}{2q}\geq\frac{1}{2}-\frac{1}{6}=\frac{1}{3},
\end{equation*}
so that $u_n\leq 3u_{n-1}<u_n$, again, a contradiction, completing the proof.
\end{proof}
Let $u_n=q_1^{\alpha_1}q_2^{\alpha_2}\cdots q_k^{\alpha_k}$ be the prime factorisation of $u_n$. Let $l:=n-m$. Since $m$ is even by Lemma \ref{lem}, we can see that $\beta^n<\alpha^l\beta^m$, so that
\begin{equation}\label{eqn1}
\alpha^l<\frac{\alpha^n-\beta^n}{\alpha^m-\beta^m}=\frac{u_n}{u_m}=\frac{u_n}{\varphi\left(u_n\right)}.
\end{equation}
We make use of \eqref{eqn1} repeatedly throughout the paper.
\section{The Case of $m<2^{416}$}
In this section, we completely eliminate the possibility that $m<2^{416}$. We divide into two cases: $n$ is odd and $n$ is even.
\begin{case}{$n$ is odd}
\newline
\newline
By Lemma \ref{lem1}, we have
\begin{equation*}
v_n^2\equiv -4\pmod{q_i}
\end{equation*}
for all $1\leq i\leq k$. It follows that $-1$ is a quadratic residue $\pmod{q_i}$, so that $q_i\equiv 1\pmod 4$ if $q_i$ is odd. Let $p_i'$ be the $ith$ prime number that is congruent to $1\pmod 4$. By Maple, we can calculate
\begin{equation}\label{eqn31}
2\left(\frac{p_{415}'}{p_{415}'-1}\right)^t\prod_{i=1}^{414}\left(\frac{p_i'}{p_i'-1}\right)<4.592\cdot 1.0002^t<\max\left\{5,2^t\right\}
\end{equation}
for all $t\geq 0$. Assume that $P\geq 5$. Then we have
\begin{equation}\label{eqn32}
\max\left\{5,2^{t_1}\right\}\leq P<\frac{u_n}{\varphi\left(u_n\right)}=\prod_{i=1}^k\left(\frac{q_i}{q_i-1}\right).
\end{equation}
Combining \eqref{eqn31} and \eqref{eqn32}, we can see that $k\geq 416+t_1$. Therefore, $2\cdot 4^{415+t_1}\mid u_m$, or $2^{831+2t_1}\mid u_m$. Lemma \ref{lem9} gives $2^{829}\mid m$, contradicting $m<2^{416}$. Suppose $P=4$. Since $n$ is odd, all of the $q_i$'s are odd, so that $q_i\equiv 1\pmod 4$ for all $1\leq i\leq k$. Again, by Maple, we calculate
\begin{equation*}
\prod_{i=1}^{415}\left(\frac{p_i'}{p_i'-1}\right)<2.296<4<\frac{u_n}{\varphi\left(u_n\right)}=\prod_{i=1}^k\left(\frac{q_i}{q_i-1}\right).
\end{equation*}
Therefore, $k\geq 416$, so that $2^{832}\mid u_m$. Lemma \ref{lem9} gives $2^{830}\mid m$, contradicting $m<2^{416}$. Finally, suppose $P=3$. Since $u_n\geq 3$, $u_m$ must be even. So $3\mid m$ by Lemma \ref{lem2}. If $u_n$ is odd, then we use the exact same argument in the $P=4$ case to conclude that $m<2^{416}$ is impossible. So assume $u_n$ is even. Then $3\mid n$ by Lemma \ref{lem2} and $l\geq 3$. Therefore,
\begin{equation*}
2\prod_{i=1}^{414}\left(\frac{p_i'}{p_i'-1}\right)<4.592<27<\frac{u_n}{\varphi\left(u_n\right)}=\prod_{i=1}^k\left(\frac{q_i}{q_i-1}\right),
\end{equation*}
so that $k\geq 416$. Then $2\cdot 4^{415}\mid u_m$, so that $2^{829}\mid m$, again contradicting $m<2^{416}$.
\end{case}
\begin{case}{$n$ is even}
\newline
\newline
Let $p_i$ denote the $i$th prime number. Suppose that $P\geq 4$. Since $m$ is even, we have $l\geq 2$, so that
\begin{equation}\label{eqn33}
\max\left\{16,2^{t_1}\right\}\leq P^2<\frac{u_n}{\varphi\left(u_n\right)}=\prod_{i=1}^k\left(\frac{q_i}{q_i-1}\right).
\end{equation}
By Maple, we can calculate
\begin{equation}\label{eqn34}
\left(\frac{p_{420}}{p_{420}-1}\right)^t\prod_{i=1}^{419}\left(\frac{p_i}{p_i-1}\right)<14.24\cdot 1.0004^t<\max\left\{16,2^t\right\}
\end{equation}
for any $t\geq 0$. Combining \eqref{eqn33} and \eqref{eqn34}, we can see that $k\geq 419+t_1$. Thus $2^{418+t_1}\mid u_m$, so that $2^{416}\mid m$, contradicting $m<2^{416}$. Suppose that $P=3$. Again, as in the case of $n$ being odd, we have $3\mid m$. Suppose that $u_n$ is odd. By Maple, we have
\begin{equation*}
\prod_{i=2}^{419}\left(\frac{p_i}{p_i-1}\right)<7.12<9<\frac{u_n}{\varphi\left(u_n\right)}=\prod_{i=1}^k\left(\frac{q_i}{q_i-1}\right).
\end{equation*}
So $k\geq 419$. Thus $2^{418}\mid u_m$, so that $2^{416}\mid m$, again contradicting $m<2^{416}$. So assume that $u_n$ is even. Then $3\mid n$ and, again, $l\geq 3$. Therefore,
\begin{equation*}
\prod_{i=1}^{419}\left(\frac{p_i}{p_i-1}\right)<14.24<27<\frac{u_n}{\varphi\left(u_n\right)}=\prod_{i=1}^k\left(\frac{q_i}{q_i-1}\right),
\end{equation*}
so, again, $k\geq 419$, leading to the impossibility of $m<2^{416}$.
\end{case}
We have proved the following:
\begin{lemma}
If $\varphi\left(u_n\right)=u_m$ and $n>1$, then $n>m\geq 2^{416}$ and $k\geq 416$.
\end{lemma}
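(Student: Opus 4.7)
The plan is to turn \eqref{eqn1} into a large lower bound on the number $k$ of distinct prime factors of $u_n$, and then use Lemma \ref{lem9} to convert this into a large $2$-adic valuation on $m$. The inequality $n>m$ is automatic from $\varphi(u_n)<u_n$ and the strict monotonicity of $(u_n)_{n\geq 1}$, so the core task is to rule out $m<2^{416}$, for which I would argue by contradiction.

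I would split on the parity of $n$. Suppose first that $n$ is odd. Then Lemma \ref{lem1} gives $v_n^2\equiv -4\pmod{q_i}$ for every prime factor $q_i$ of $u_n$, so $-1$ is a quadratic residue modulo each odd $q_i$, forcing $q_i\equiv 1\pmod 4$. This cuts down the primes available to the product $\prod_{i=1}^k q_i/(q_i-1)$ to primes $\equiv 1\pmod 4$ (which grow slowly), and simultaneously guarantees that each such $q_i$ contributes two powers of $2$ to $\varphi(u_n)=u_m$. Using $P<\alpha^l<u_n/\varphi(u_n)$ and numerically comparing $P$, or rather $\max\{5,2^{t_1}\}$, with Maple-computed partial products $\prod_{i=1}^{N}(p_i'/(p_i'-1))$ over the first $N$ primes $\equiv 1\pmod 4$, I would force $k\geq 416+t_1$ in the generic case $P\geq 5$. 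Lemma \ref{lem9} then places a power of $2$ vastly exceeding $2^{416}$ into $m$, a contradiction.

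When $n$ is even, I would use that $m$ is even by Lemma \ref{lem}, so $l\geq 2$ and \eqref{eqn1} strengthens to $P^2<\alpha^l<u_n/\varphi(u_n)$. The congruence restriction on $q_i$ is now lost, but the upgrade from $P$ to $P^2$ on the left more than compensates, and an analogous Maple computation over \emph{all} primes would force $k\geq 419+t_1$ when $P\geq 4$, once again delivering $2^{416}\mid m$ by Lemma \ref{lem9}.

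The main obstacle is the low-$P$ boundary cases $P=3$ and $P=4$, where $P$ or $P^2$ is just barely too small to dominate the partial product of reciprocals over several hundred primes. I would handle these individually, exploiting extra divisibility. For $P=3$, Lemma \ref{lem9} yields $3\mid m$ whenever $u_m$ is even and additionally $3\mid n$ when $u_n$ is even, boosting $l$ to $3$ and upgrading the relevant bound to $\alpha^l>27$, which comfortably dominates the product. For $P=4$ with $n$ odd, the fact that all $q_i$ are odd and $\equiv 1\pmod 4$ tightens the Maple bound enough to force $k\geq 416$ directly. In each residual sub-case, Lemma \ref{lem9} then produces the contradiction with $m<2^{416}$.
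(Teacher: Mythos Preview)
Your proposal is correct and follows essentially the same approach as the paper: the same parity split on $n$, the same quadratic-residue argument via Lemma \ref{lem1} when $n$ is odd, the same Maple-computed partial products over primes (respectively primes $\equiv 1\pmod 4$) compared against $\max\{5,2^{t_1}\}$ or $\max\{16,2^{t_1}\}$, the same separate handling of $P=3$ and $P=4$ with the $l\geq 3$ boost, and the same conversion of $k$ into a $2$-adic lower bound on $m$ via Lemma \ref{lem9}.
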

\section{Bounding $l$ in terms of $n$ and $\alpha$}
Our next task is to bound $l$ in terms of $n$ and $\alpha$. We divide into two cases: $P$ is odd and $P$ is even.
\setcounter{case}{0}
\begin{case}{$P$ is odd}
\newline
\newline
Let $k(n):=\frac{\log n}{\log 2}+2=\frac{\log(4n)}{\log 2}$. Clearly, $2^k\mid u_m$, so that from Lemma \ref{lem9}, we have $2^{k-2}\mid m$, so that $2^k<4n$. Thus, $k<k(n)$. Letting $p_j$ be the $j$th prime number, we therefore deduce from Lemma \ref{lem5} that
\begin{equation*}
p_k\leq p_{\lfloor k(n)\rfloor}<k(n)\left(\log k(n)+\log\log k(n)\right):=q(n).
\end{equation*}
Since $n>2^{416}$, $k(n)\geq 418$, so that $q(n)>3274$ and part 2) of Lemma \ref{lem5} applies with $x=q(n)$ to which we obtain
\begin{equation*}
\frac{\varphi\left(u_n\right)}{u_n}=\prod_{i=1}^k\left(1-\frac{1}{q_i}\right)\geq\prod_{2\leq p\leq q(n)}\left(1-\frac{1}{p}\right)>\frac{1}{1.79\log q(n)\left(1+\displaystyle\frac{1}{2(\log q(n))^2}\right)}.
\end{equation*}
Thus, from \eqref{eqn1}, we have
\begin{equation*}
\alpha^l<1.79\log q(n)\left(1+\frac{1}{2(\log q(n))^2}\right).
\end{equation*}
Taking logarithms, we get
\begin{align}
l\log\alpha &<\log(1.79)+\log\left(1+\frac{1}{2(\log 3274)^2}\right)+\log\log q(n)\nonumber\\
&<0.59+\log\log q(n)\label{eqn2}.
\end{align}
We also have that in our range of $n$ that $q(n)<(\log(4n))^{1.45}$, so that, by \eqref{eqn2}, we have
\begin{equation}\label{eqn3}
l\log\alpha<0.59+\log 1.45+\log\log\log(4n)=0.59+\log 1.45+\log\log(\log 4+\log n).
\end{equation}
Note that
\begin{equation*}
\log 4+\log n=\log n\left(1+\frac{\log 4}{\log n}\right)<\left(1+\frac{\log 4}{216\log 2}\right)\log n<\frac{109\log n}{108},
\end{equation*}
so that, from \eqref{eqn3}, we have
\begin{align*}
l\log\alpha &<0.59+\log 1.45+\log\log\left(\frac{109\log n}{108}\right)\\
&=0.59+\log 1.45+\log\left(\log\left(\frac{109}{108}\right)+\log\log n\right).
\end{align*}
Also, note that
\begin{align*}
\log\left(\frac{109}{108}\right)+\log\log n&=\log\log n\left(1+\frac{\log\left(\displaystyle\frac{109}{108}\right)}{\log\log n}\right)\\
&<\left(1+\frac{\log\left(\displaystyle\frac{109}{108}\right)}{\log\log\left(2^{416}\right)}\right)\log\log n\\
&<1.002\log\log n,
\end{align*}
so that
\begin{align*}
l\log\alpha&<0.59+\log 1.45+\log(1.002\log\log n)\\
&=0.59+\log 1.45+\log 1.002+\log\log\log n\\
&<0.97+\log\log\log n.
\end{align*}
Thus,
\begin{equation*}
l<\frac{0.97+\log\log\log n}{\log\alpha}<\frac{\log\log\log n}{\log\alpha}+0.82,
\end{equation*}
where we have used the fact that $\log\alpha>1.19$.
\end{case}
\begin{case}{$P$ is even}
\newline
\newline
Let $k(n):=\frac{\log n}{\log 2}+t_1-1=\frac{\log\left(2^{t_1-1}n\right)}{\log 2}$. Clearly, $2^k\mid u_m$, so that from Lemma \ref{lem9}, we have $2^{k-t_1+1}\mid m$, so that $2^k<2^{t_1-1}n$. Thus, $k<k(n)$. Letting $p_j$ be the $j$th prime number, we therefore deduce from Lemma \ref{lem5} that
\begin{equation*}
p_k\leq p_{\lfloor k(n)\rfloor}<k(n)\left(\log k(n)+\log\log k(n)\right):=q(n).
\end{equation*}
Since $n>2^{416}$, $k(n)\geq 416$, so that $q(n)\geq 3256$, and part 2) of Lemma \ref{lem5} applies with $x=q(n)$. We obtain
\begin{equation*}
\frac{\varphi\left(u_n\right)}{u_n}=\prod_{i=1}^k\left(1-\frac{1}{q_i}\right)\geq\prod_{2\leq p\leq q(n)}\left(1-\frac{1}{p}\right)>\frac{1}{1.79\log q(n)\left(1+\displaystyle\frac{1}{2(\log q(n))^2}\right)}.
\end{equation*}
Thus, from \eqref{eqn1}, we have
\begin{equation*}
\alpha^l<1.79\log q(n)\left(1+\frac{1}{2(\log q(n))^2}\right).
\end{equation*}
Taking logarithms, we get
\begin{align}
l\log\alpha &<\log(1.79)+\log\left(1+\frac{1}{2(\log 3256)^2}\right)+\log\log q(n)\nonumber\\
&<0.59+\log\log q(n)\label{eqn4}.
\end{align}
We also have that in our range of $n$ that $q(n)<\left(\left(\log(2^{t_1-1}n\right))\right)^{1.45}$, so that, by \eqref{eqn4}, we have
\begin{align}
l\log\alpha &<0.59+\log 1.45+\log\log\log\left(2^{t_1-1}n\right)\nonumber\\
&=0.59+\log 1.45+\log\log\left(\left(t_1-1\right)\log 2+\log n\right)\label{eqn5}.
\end{align}
Note that
\begin{equation*}
\left(t_1-1\right)\log 2+\log n=\left(1+\frac{\left(t_1-1\right)\log 2}{\log n}\right)\log n,
\end{equation*}
so that, from \eqref{eqn5}, we have
\begin{align}
l\log\alpha &<0.59+\log 1.45+\log\log\left(\left(1+\frac{\left(t_1-1\right)\log 2}{\log n}\right)\log n\right)\nonumber\\
&=0.59+\log 1.45+\log\left(\log\left(1+\frac{\left(t_1-1\right)\log 2}{\log n}\right)+\log\log n\right)\nonumber\\
&<0.59+\log 1.45+\log\left(\frac{\left(t_1-1\right)\log 2}{\log n}+\log\log n\right)\label{eqn6}.
\end{align}
Note that
\begin{align*}
\frac{\left(t_1-1\right)\log 2}{\log n}+\log\log n&<\log\log n\left(1+\frac{\left(t_1-1\right)\log 2}{\log\left(2^{416}\right)\log\log\left(2^{416}\right)}\right)\\
&<\left(1+0.0005\left(t_1-1\right)\right)\log\log n,
\end{align*}
so that, from \eqref{eqn6}, we have
\begin{align*}
l\log\alpha &<0.59+\log 1.45+\log\left(1+0.0005\left(t_1-1\right)\right)+\log\log\log n\\
&<0.59+\log 1.45+0.0005\left(t_1-1\right)+\log\log\log n\\
&<0.97+0.0005\left(t_1-1\right)+\log\log\log n.
\end{align*}
Thus,
\begin{equation*}
l<0.82+\frac{0.0005\left(t_1-1\right)}{\log\alpha}+\frac{\log\log\log n}{\log\alpha}.
\end{equation*}
Since $\log\alpha>\log P\geq \log\left(2^{t_1}\right)=t_1\log 2$, we deduce
\begin{equation*}
l<0.82+\frac{0.0005}{\log 2}+\frac{\log\log\log n}{\log\alpha}<0.83+\frac{\log\log\log n}{\log\alpha}.
\end{equation*}
\end{case}
Thus, regardless of the value of $P\geq 3$, we have proved the following.
\begin{lemma}\label{lem10}
We have $n>m\geq 2^{416}$ and
\begin{equation*}
l<\frac{\log\log\log n}{\log\alpha}+0.83.
\end{equation*}
\end{lemma}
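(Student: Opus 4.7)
The statement has two parts: $n > m \geq 2^{416}$ is immediate from the preceding lemma, so the real content is the upper bound on $l$. The plan for that bound is to combine an upper bound on the number $k$ of distinct prime factors of $u_n$ with a Mertens-style lower bound on $\varphi(u_n)/u_n$, then feed the result into \eqref{eqn1}.

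First I would bound $k$ from above. Every odd prime $q_i\mid u_n$ forces $q_i-1$ to contribute a factor of $2$ to $\varphi(u_n)=u_m$, so roughly $2^k\mid u_m$. Lemma \ref{lem9} then translates this $2$-adic divisibility into a divisibility of $m$, and combining with $m<n$ yields an explicit bound $k<k(n)$ where $k(n)$ has the shape $\log_2 n + C(t_1)$, the additive constant depending only on the parity of $P$. Applying Lemma \ref{lem5} part 1 to the index $k$ gives $p_k<q(n):=k(n)(\log k(n)+\log\log k(n))$; since $n\geq 2^{416}$, the quantity $q(n)$ easily exceeds the threshold $2861$ required for Lemma \ref{lem5} part 2.

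Because the $q_i$ are distinct primes with $q_i\geq p_i$, I would bound
\begin{equation*}
\frac{\varphi(u_n)}{u_n}=\prod_{i=1}^{k}\left(1-\frac{1}{q_i}\right)\geq\prod_{p\leq q(n)}\left(1-\frac{1}{p}\right)>\frac{1}{1.79\log q(n)\bigl(1+1/(2\log^2 q(n))\bigr)}.
\end{equation*}
Inserting this into \eqref{eqn1} and taking logarithms would yield $l\log\alpha<0.59+\log\log q(n)+o(1)$. Using the rough estimate $q(n)<(\log(cn))^{1.45}$ for an appropriate constant $c$ (depending on the parity of $P$), two further applications of $\log(1+x)<x$ should collapse this to $l\log\alpha<0.97+\log\log\log n+o(1)$, and dividing by $\log\alpha>1.19$ gives the claimed bound.

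The main obstacle will be the careful bookkeeping of additive constants through three nested logarithms to land precisely on the clean constant $0.83$, and uniformly treating the parity dichotomy of $P$. In the even case the shift $t_1$ enters $k(n)$ and hence propagates through every intermediate bound; it is absorbed at the very end using $\log\alpha\geq t_1\log 2$, so that the $t_1$-dependent slack becomes a harmless constant. The natural organization is therefore to split into the cases $P$ odd and $P$ even, run the same telescoping argument with the parity-adjusted $k(n)$, and verify that both cases converge to the same final constant $0.83$.
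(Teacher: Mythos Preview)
Your proposal is correct and follows essentially the same approach as the paper: split into $P$ odd and $P$ even, bound $k$ via the $2$-adic valuation of $u_m=\varphi(u_n)$ together with Lemma~\ref{lem9}, apply parts 1) and 2) of Lemma~\ref{lem5} to get the Mertens-type lower bound on $\varphi(u_n)/u_n$, feed this into \eqref{eqn1}, and then push the constants through the nested logarithms, absorbing the $t_1$-dependence in the even case via $\log\alpha>\log P\geq t_1\log 2$. The only cosmetic difference is that the paper carries explicit numerical constants at each step rather than $o(1)$'s, landing on $0.82$ in the odd case and $0.83$ in the even case.
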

\section{Bounding the primes $q_i$ for all $1\leq i\leq k$}
We next bound all the primes dividing $u_n$. Note that $u_m=\prod_{i=1}^k\left(q_i-1\right)q_i^{\alpha_i-1}$. Let $B=\prod_{i=1}^kq_i^{\alpha_i-1}$, so that $u_n=q_1\cdots q_kB$. Also, $B\mid\gcd\left(u_m,u_n\right)$. By Lemma \ref{lem3}, we have $B\mid u_{\gcd(m,n)}$, so $B\mid u_l$. Thus,
\begin{equation}\label{eqn7}
B\leq u_l<\alpha^{l-1}<(\log\log n)\alpha^{-0.17}.
\end{equation}
We use the argument in Section 3.3 of \cite{luca2} to bound the primes. Note that
\begin{equation*}
1-\prod_{i=1}^k\left(1-\frac{1}{q_i}\right)=1-\frac{u_m}{u_n}=\frac{u_n-u_m}{u_n}\geq\frac{u_n-u_{n-1}}{u_n}>\frac{(P-1)u_{n-1}}{u_n},
\end{equation*}
so that
\begin{equation*}
\frac{(P-1)u_{n-1}}{u_n}<\sum_{i=1}^k\frac{1}{q_i}<\frac{k}{q_1}.
\end{equation*}
Thus,
\begin{equation*}
q_1<\frac{ku_n}{(P-1)u_{n-1}}<\frac{k(P+1)}{(P-1)}\leq 2k.
\end{equation*}
Defining
\begin{equation*}
y_i:=\prod_{j=1}^iq_j,
\end{equation*}
we can deduce that
\begin{equation*}
y_i<\left(2k\alpha^{1.83}\log\log n\right)^{\frac{3^i-1}{2}}
\end{equation*}
by induction on $i$, using the same method to prove inequality (13) in \cite{luca2}. In particular,
\begin{equation*}
q_1\cdots q_k<\left(2k\alpha^{1.83}\log\log n\right)^{\frac{3^k-1}{2}}.
\end{equation*}
Using Lemma \ref{lem10} and \eqref{eqn7}, we deduce
\begin{equation*}
u_n=q_1\cdots q_kB<\left(2k\alpha^{1.83}\log\log n\right)^{\frac{3^k-1}{2}}(\log\log n)\alpha^{-0.17}<\left(2k\alpha^{1.83}\log\log n\right)^{\frac{3^k+1}{2}}.
\end{equation*}
Since $\alpha^{n-2}<u_n$, we deduce
\begin{equation*}
(n-2)\log\alpha<\frac{\left(3^k+1\right)}{2}\log\left(2k\alpha^{1.83}\log\log n\right).
\end{equation*}
Thus,
\begin{equation}\label{eqn8}
3^k>(n-2)\left(\frac{2\log\alpha}{\log\left(2k\alpha^{1.83}\log\log n\right)}\right)-1.
\end{equation}
\section{The Case when $n$ is odd}
Here we show that if $n>1$ is odd, then $\varphi\left(u_n\right)=u_m$ has no solutions. We alredy know that $n>m>2^{416}$. We assume for a contradiction that $n$ and $m$ satisfy these conditions. We know that $q_i\equiv 1\pmod 4$ if $q_i$ is odd, which has to be the case for at least $k-1$ of these primes. Thus $4^{k-1}\mid u_m$. Again, we divide into two cases: $P$ is odd and $P$ is even.
\begin{case}{$P$ is odd}
\newline
\newline
Since $2^{2k-2}\mid u_m$, we can see that $2^{2k-4}\mid m$ so that $2^{2k-4}\leq m<n$. Thus $2^{2k}<16n$. So $3^k<2^{1.6k}<(16n)^{0.8}$. Also, $2k-4<\frac{\log n}{\log 2}$ so $k<\frac{\log(16n)}{2\log 2}$. Substituting into \eqref{eqn8}, we obtain
\begin{align*}
(16n)^{0.8}&>(n-2)\left(\frac{2\log\alpha}{\log\left(\alpha^{1.85}\left(\displaystyle\frac{\log(16n)}{\log 2}\right)\log\log n\right)}\right)-1\\
&=(n-2)\left(\frac{1.85\log\alpha+\log\log(16n)-\log\log 2+\log\log\log n}{2\log\alpha}\right)^{-1}-1\\
&>(n-2)\left(0.925+0.16+\frac{\log\log(16n)+\log\log\log n}{2\log\alpha}\right)^{-1}-1\\
&\geq(n-2)\left(1.085+\frac{\log\log(16n)+\log\log\log n}{2.38}\right)^{-1}-1.
\end{align*}
Note that
\begin{align*}
\log\log(16n)&=\log(\log 16+\log n)\\
&=\log\left(\left(1+\frac{\log 16}{\log n}\right)\log n\right)\\
&\leq\log\left(\left(\frac{105}{104}\right)\log n\right)\\
&<0.01+\log\log n,
\end{align*}
so that
\begin{equation*}
(16n)^{0.8}>(n-2)\left(1.09+\frac{\log\log n}{1.19}\right)^{-1}-1.
\end{equation*}
This implies that $n<3.4\cdot 10^7<2^{416}$, a contradiction.
\end{case}
\begin{case}{$P$ is even}
\newline
\newline
Since $2^{2k-2}\mid u_m$, we can see that $2^{2k-t_1-1}\mid m$, so that $2^{2k-t_1-1}\leq m<n$. Since $n$ is odd, we have that $u_n$ is odd, so that $q_i\equiv 1\pmod 4$ for all $1\leq i\leq k$. Thus, $q_i\geq 5$ for all $1\leq i\leq k$. We therefore deduce
\begin{equation*}
2^{t_1}\leq P<\alpha<\frac{u_n}{u_m}=\prod_{i=1}^k\left(\frac{q_i}{q_i-1}\right)\leq\left(\frac{5}{4}\right)^k.
\end{equation*}
Thus,
\begin{equation*}
2k-t_1-1\geq 2k-\frac{k\log\left(\frac{5}{4}\right)}{\log 2}-1>1.67k-1.
\end{equation*}
Therefore, $2^{1.67k-1}<n$, or $k<\frac{\log(2n)}{1.67\log 2}$. Then, also, $3^k<2^{1.59k}<(2n)^{1.59/1.67}<(2n)^{0.96}$. Substituting into \eqref{eqn8}, we obtain
\begin{align*}
(2n)^{0.96}&>(n-2)\left(\frac{2\log\alpha}{\log\left(\displaystyle\frac{2\log(2n)\alpha^{1.85}\log\log n}{1.67\log 2}\right)}\right)-1\\
&=(n-2)\left(\frac{\log 2+\log\log(2n)+1.85\log\alpha+\log\log\log n-\log 1.67-\log\log 2}{2\log\alpha}\right)^{-1}-1\\
&>(n-2)\left(\frac{0.55+\log\log(2n)+\log\log\log n}{2\log\alpha}+0.925\right)^{-1}-1\\
&>(n-2)\left(1.157+\frac{\log\log(2n)}{1.19}\right)^{-1}-1.
\end{align*}
Note that
\begin{align*}
\log\log(2n)&=\log(\log 2+\log n)\\
&=\log\left(\left(1+\frac{\log 2}{\log n}\right)\log n\right)\\
&<\log\left(\left(\frac{417}{416}\right)\log n\right)\\
&<0.003+\log\log n,
\end{align*}
so that
\begin{equation*}
(2n)^{0.96}>(n-2)\left(1.16+\frac{\log\log n}{1.19}\right)^{-1}-1.
\end{equation*}
This implies that $n<4\cdot 10^{23}<2^{416}$, a contradiction.
\end{case}
\section{Bounding $l$ Further}
Here we substantially improve our bound on $l$. Since we have eliminated the case of $n$ being odd, we assume $n$ is even throughout the rest of the paper. Write $n=2^sr_1^{\lambda_1}\cdots r_t^{\lambda_t}=:2^sn_1$ as the prime factorisation of $n$ with $s\geq 1$ and $3\leq r_1<r_2<\ldots<r_t$. From \eqref{eqn1}, we obtain the following:
\begin{align*}
l\log\alpha&<\sum_{p\mid u_n}\log\left(1+\frac{1}{p-1}\right)\\
&<\log\left(\frac{15}{4}\right)+\sum_{\substack{p\mid u_n\\p\geq 7}}\log\left(1+\frac{1}{p-1}\right)\\
&<\log\left(\frac{15}{4}\right)+\sum_{\substack{p\mid u_n\\p\geq 7}}\frac{1}{p-1}\\
&\leq\log\left(\frac{15}{4}\right)+\sum_{7\leq p\leq\alpha^4+1}\frac{1}{p-1}+\sum_{\substack{p\mid u_n\\p>\alpha^4+1}}\frac{1}{p-1}\\
&<\log\left(\frac{15}{4}\right)+\sum_{7\leq p\leq\alpha^4+1}\frac{1}{p}+\sum_{\substack{p\mid u_n\\p>\alpha^4+1}}\frac{1}{p}+\sum_{p\geq 7}\frac{1}{p(p-1)}\\
&<\log\left(\frac{15}{4}\right)+\sum_{7\leq p\leq\alpha^4+1}\frac{1}{p}+\sum_{7\leq p\leq 547}\frac{1}{p(p-1)}+\frac{1}{546}+\sum_{\substack{p\mid u_n\\p>\alpha^4+1}}\frac{1}{p}\\
&<1.38+\sum_{7\leq p\leq\alpha^4+1}\frac{1}{p}+\sum_{\substack{p\mid u_n\\p>\alpha^4+1}}\frac{1}{p}.
\end{align*}
By (3.18) in \cite{rosser}, we have
\begin{equation*}
\sum_{p\leq x}\frac{1}{p}<\log\log x+0.2615+\frac{1}{2\log^2(x)}<\log\log x+0.2772.
\end{equation*}
for $x\geq 286$ and, by computation, we can confirm that
\begin{equation*}
\sum_{p\leq x}\frac{1}{p}<\log\log x+0.2965
\end{equation*}
for $80\leq x\leq 286$. Therefore,
\begin{equation*}
\sum_{7\leq p\leq\alpha^4+1}\frac{1}{p}<\frac{1}{\alpha^4}+\sum_{7\leq p\leq\alpha^4}\frac{1}{p}<\frac{1}{119}+\log\log\alpha+\log 4+0.2965-\frac{1}{2}-\frac{1}{3}-\frac{1}{5}<\log\log\alpha+0.658.
\end{equation*}
Thus,
\begin{equation*}
l\log\alpha<\log\log\alpha+1.38+0.658+\sum_{\substack{p\mid u_n\\p>\alpha^4+1}}\frac{1}{p}=\log\log\alpha+2.038+\sum_{\substack{p\mid u_n\\p>\alpha^4+1}}\frac{1}{p}.
\end{equation*}
Notice that for $d\mid n$, we have
\begin{equation*}
\alpha^{4\left|\mathcal{P}_d'\right|}<\prod_{p\in\mathcal{P}_d'}p\leq u_d<\alpha^{d-1}
\end{equation*}
so that $\left|\mathcal{P}_d'\right|<\frac{d-1}{4}$. We therefore derive
\begin{equation*}
\sum_{\substack{d\mid n\\d\leq\alpha^2}}T_d<\sum_{d=2}^{\lfloor\alpha^2\rfloor}\frac{\left|\mathcal{P}_d'\right|}{\alpha^4}<\sum_{d=2}^{\lfloor\alpha^2\rfloor}\frac{d-1}{4\alpha^4}<\frac{1}{8},
\end{equation*}
so that
\begin{equation}\label{eqn12}
l\log\alpha<\log\log\alpha+2.163+\sum_{\substack{d\mid n\\d>\alpha^2}}T_d.
\end{equation}
We first show that $t\geq 1$, i.e. $n$ is not a power of $2$. Suppose to the contrary that $n=2^s$. Since $\alpha^2>10$, we therefore have
\begin{equation*}
l\log\alpha-\log\log\alpha<2.163+\sum_{i=4}^{\infty}\frac{2\log\left(2^i\right)}{2^i}=2.163+3\log 2-\frac{\log 4}{2}-\frac{\log 8}{4}<3.03.
\end{equation*}
Since $l$ is even and $\log\alpha>1.19$, this gives $l=2$. Suppose $P$ is odd. Then $2^{s+2}\mid u_n$, so $2^{s+1}\mid u_m$. Then $2^{s-1}\mid m$ so that $2^{s-1}\mid l$. So $s=1$ or $2$, so $n\leq 4$, contradicting $n>2^{416}$. Suppose $P$ is even. Then $2^{s+t_1-1}\mid u_n$, so $2^{s+t_1-2}\mid u_m$. Then $2^{s-1}\mid m$ so that $2^{s-1}\mid l$, again, contradicting $n>2^{416}$.
\newline
\newline
Thus, $t\geq 1$. Let
\begin{equation*}
\mathcal{I}:=\{i: r_i\mid m\}\text{ and }\mathcal{J}:=\{i: r_i\nmid m\}.
\end{equation*}
and
\begin{equation*}
M:=\prod_{i\in\mathcal{I}}r_i.
\end{equation*}
We can see that
\begin{equation*}
\sum_{\substack{d\mid n\\d>\alpha^2}}T_d=L_1+L_2,
\end{equation*}
where
\begin{equation*}
L_1:=\sum_{\substack{d\mid n\\r\mid d\implies r\mid 2M\\d>\alpha^2}}T_d\text{ and }L_2:=\sum_{\substack{d\mid n\\r_i\mid d\text{ for some }i\in\mathcal{J}\\d>\alpha^2}}T_d
\end{equation*}
where $r$ is prime. Let
\begin{equation*}
n':=2^s\prod_{i\in\mathcal{I}}r_i^{\lambda_i}
\end{equation*}
and notice that all divisors of $n'$ occur as divisors in the sum $L_1$. By Lemmas \ref{lem6} and \ref{lem8}, we therefore have
\begin{align*}
L_1&\leq\sum_{\substack{d\mid n'\\d>\alpha^2}}\frac{2\log d}{d}\\
&\leq\sum_{\substack{d\mid 4n'\\d>\alpha^2}}\frac{2\log d}{d}\\
&\leq\left(\sum_{d\mid 4n'}\frac{2\log d}{d}\right)-\log 2-\frac{\log 4}{2}-\frac{\log 8}{4}\\
&<2\left(\sum_{r\mid 2M}\frac{\log r}{r-1}\right)\left(\frac{2M}{\varphi(2M)}\right)-\log 2-\frac{\log 4}{2}-\frac{\log 8}{4}.
\end{align*}
Let $r_j$ be the smallest prime in $\mathcal{J}$ if $\mathcal{J}\neq\phi$. Since $s\geq 1$, we may write
\begin{equation*}
u_n=u_{n_1}v_{n_1}v_{2n_1}\cdots v_{2^{s-1}n_1},
\end{equation*}
by Lemma \ref{lem1}. If $q$ is any odd prime factor of $v_{n_1}$, then taking the equality in Lemma \ref{lem1} $\pmod q$ gives $Du_{n_1}^2\equiv 4\pmod q$. Therefore, $D$ is a quadratic residue $\pmod q$, so that $z(q)\mid q-1$. By Lemmas \ref{lem1} and \ref{lem15}, $v_d$ has a primitive divisor for every odd $d>3$. Noticing that $v_3=P\left(P^2+3\right)$, we can see that $v_3$ must have a prime factor that doesn't divide $P$, so, for all odd $d>1$, $v_d$ has a primitive prime factor, say $q_d$, with $d\mid q_d-1$. Observing that the number of divisors $d$ of $n_1$ that are multiples of $r_j$ is $\tau\left(\frac{n_1}{r_j}\right)$ gives
\begin{equation}\label{eqn40}
e_{r_j}=\nu_{r_j}\left(\varphi\left(u_n\right)\right)\geq\nu_{r_j}\left(\varphi\left(u_{n_1}\right)\right)\geq\tau\left(\frac{n_1}{r_j}\right)\geq\frac{\tau\left(n_1\right)}{2},
\end{equation}
with the last inequality following from
\begin{equation*}
\frac{\tau\left(\displaystyle\frac{n_1}{r_j}\right)}{\tau\left(n_1\right)}=\frac{\lambda_j}{\lambda_j+1}\geq\frac{1}{2}.
\end{equation*}
If $r_j\mid P^2+4$, then we have that $z\left(r_j\right)=r_j$. But then since $r_j\nmid m$, we have $\nu_{r_j}\left(\varphi\left(u_n\right)\right)=0$, so that $\tau\left(n_1\right)=0$, which is impossible. Therefore, $r_j\nmid P^2+4$, and we may use Lemma \ref{lem4} to conclude that
\begin{equation}\label{eqn9}
\tau\left(n_1\right)\leq\frac{\left(r_j+1\right)\log\alpha+0.2}{\log r_j}.
\end{equation}
Then we also have
\begin{align*}
L_2&\leq 2\tau\left(n_1\right)\sum_{0\leq a\leq s}\frac{\log\left(2^ar_j\right)}{2^ar_j}\\
&<\tau\left(n_1\right)\frac{\left(4\log 2+4\log r_j\right)}{r_j}\\
&\leq\frac{\left(r_j+1\right)\log\alpha+0.2}{\log r_j}\cdot\frac{\left(4\log 2+4\log r_j\right)}{r_j}\\
&<\log\alpha\left(1+\frac{1.169}{r_j}\right)\left(4+\frac{4\log 2}{\log r_j}\right).
\end{align*}
Then
\begin{align}
l\log\alpha&<\log\log\alpha+2.163+2\left(\sum_{r\mid 2M}\frac{\log r}{r-1}\right)\left(\frac{2M}{\varphi(2M)}\right)-\log 2-\frac{\log 4}{2}-\frac{\log 8}{4}\nonumber\\
&\quad+\log\alpha\left(1+\frac{1.169}{r_j}\right)\left(4+\frac{4\log 2}{\log r_j}\right)\label{eqn10}
\end{align}
if $\mathcal{J}\neq\phi$ and
\begin{equation}\label{eqn13}
l\log\alpha<\log\log\alpha+2.163+2\left(\sum_{r\mid 2M}\frac{\log r}{r-1}\right)\left(\frac{2M}{\varphi(2M)}\right)-\log 2-\frac{\log 4}{2}-\frac{\log 8}{4}
\end{equation}
otherwise. Assume that $\mathcal{J}\neq\phi$ for the moment. We have
\begin{equation*}
\frac{L_2}{\log\alpha}<\left(1+\frac{1.169}{r_j}\right)\left(4+\frac{4\log 2}{\log r_j}\right):=g\left(r_j\right).
\end{equation*}
Since $g(x)$ is decreasing  for $x\geq 3$, we deduce that
\begin{equation*}
L_2\leq g(3)\log\alpha<9.066\log\alpha,
\end{equation*}
so that
\begin{align}
l\log\alpha&<\log\log\alpha+2.163-\log 2-\frac{\log 4}{2}-\frac{\log 8}{4}+2\left(\sum_{r\mid 2M}\frac{\log r}{r-1}\right)\left(\frac{2M}{\varphi(2M)}\right)+9.066\log\alpha\nonumber\\
&<\log\log\alpha+0.2569+2\left(\sum_{r\mid 2M}\frac{\log r}{r-1}\right)\left(\frac{2M}{\varphi(2M)}\right)+9.066\log\alpha\label{eqn14},
\end{align}
by \eqref{eqn10}. Notice that \eqref{eqn14} also holds if $\mathcal{J}=\phi$. For any $N\in\mathbb{N}$, let
\begin{equation*}
f(N):=N\log\alpha-\log\log\alpha-0.2569-2\left(\sum_{r\mid N}\frac{\log r}{r-1}\right)\left(\frac{N}{\varphi(N)}\right).
\end{equation*}
Then we have
\begin{equation}\label{eqn11}
f(l)<9.066\log\alpha
\end{equation}
by \eqref{eqn14}. Assuming $l\geq 26$, we therefore have
\begin{equation*}
l\log\alpha-\log\log\alpha-0.2569-2(\log 2)\left(1.79\log\log l+\frac{2.5}{\log\log l}\right)\left(\frac{\log l}{\log\log l-1.1714}\right)<9.066\log\alpha,
\end{equation*}
by Lemma \ref{lem5}. Thus, assuming $l\geq 26$ and dividing through by $\log\alpha$ gives
\begin{equation*}
l-\frac{\log\log\alpha}{\log\alpha}-\frac{0.2569}{\log\alpha}-\frac{2\log 2}{\log\alpha}\left(1.79\log\log l+\frac{2.5}{\log\log l}\right)\left(\frac{\log l}{\log\log l-1.1714}\right)<9.066,
\end{equation*}
so that
\begin{equation*}
l-\frac{0.2569}{\log\alpha}-\frac{2\log 2}{\log\alpha}\left(1.79\log\log l+\frac{2.5}{\log\log l}\right)\left(\frac{\log l}{\log\log l-1.1714}\right)<9.466,
\end{equation*}
Since $\log\alpha>1.19$, we may substitute in $\log\alpha=1.19$ into the above inequality for it to remain true. Once we do that, we get an inequality that implies $l<82$, so that $l\leq 80$. As well, we may also substitute in $\log\alpha=1.19$ into \eqref{eqn11} for it to remain true. Once we do that and test out all even values of $l\leq 80$, we discover only even values of $l\leq 12$ work. So $l\leq 12$. We now deduce two lemmas.
\begin{lemma}\label{lem11}
We have $2^s\mid l$. Therefore, $s\leq 3$.
\end{lemma}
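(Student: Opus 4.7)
The plan is to deduce $2^s \mid l$ from the strictly stronger statement $\nu_2(m) > \nu_2(l)$. Granting this, the elementary identity $\nu_2(n) = \nu_2(l+m) = \min(\nu_2(l),\nu_2(m)) = \nu_2(l)$, valid whenever the two valuations differ, forces $s = \nu_2(n) = \nu_2(l)$, and in particular $2^s \mid l$. Since $l \leq 12$ this gives $2^s \leq 12$, whence $s \leq 3$.

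To bound $\nu_2(m)$ from below, I would compute $\nu_2(u_m) = \nu_2(\varphi(u_n))$ directly from
\begin{equation*}
\varphi(u_n) = \prod_{i=1}^{k} q_i^{\alpha_i - 1}(q_i - 1).
\end{equation*}
Every odd prime factor $q_i$ of $u_n$ contributes at least one factor of $2$ via $q_i - 1$. If $u_n$ is even (which covers $P$ even since $n$ is even, and $P$ odd with $3 \mid n$), then $q_1 = 2$ additionally contributes $\alpha_1 - 1 = \nu_2(u_n) - 1$, so $\nu_2(\varphi(u_n)) \geq \nu_2(u_n) + k - 2$. If $u_n$ is odd (the $P$ odd, $3 \nmid n$ sub-case) then all $k$ primes are odd and $\nu_2(\varphi(u_n)) \geq k \geq 416$.

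Translating $\nu_2(u_m)$ back to $\nu_2(m)$ via Lemma \ref{lem9} in each sub-case, I would verify the bound $\nu_2(m) \geq 414$ uniformly. In the $P$ even case, Lemma \ref{lem9}(ii) gives $\nu_2(u_n) = s + t_1 - 1$, so the estimate above becomes $\nu_2(u_m) \geq s + t_1 + k - 3$; then the translation $\nu_2(m) = \nu_2(u_m) - t_1 + 1$ cancels the $t_1$'s to leave $\nu_2(m) \geq s + k - 2 \geq 415$. In both $P$-odd sub-cases the translation constant is $2$ (using Lemma \ref{lem9}(i) with $j = \nu_2(m)$ and noting $3 \mid m$, which holds because $u_m = \varphi(u_n)$ is even), and the analogous computation gives $\nu_2(m) \geq 414$. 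In every case $\nu_2(m) \geq 414 > 3 \geq \nu_2(l)$, yielding $\nu_2(l) < \nu_2(m)$ as required.

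The main bookkeeping obstacle is keeping the $P$-even case $t_1$-uniform, since $t_1$ appears on both sides of the Lemma \ref{lem9}(ii) translation; one must check that the $t_1$-dependent contribution from $q_1 = 2$ exactly offsets the $t_1 - 1$ shift. Once that cancellation is confirmed, the $P$-odd sub-cases are routine applications of Lemma \ref{lem9}(i), and the conclusion $s \leq 3$ follows at once from $2^s \mid l \leq 12$.
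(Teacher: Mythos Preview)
Your argument is correct and takes a genuinely different route from the paper. The paper proves $2^s\mid l$ by invoking Carmichael's theorem: it picks primitive prime factors $q_1,\ldots,q_{2s-2}$ of $u_4,u_8,\ldots,u_{2^s},u_{4n_1},\ldots,u_{2^sn_1}$, observes that each $q_i-1$ contributes a factor of $2$ to $\varphi(u_n)=u_m$, and combines this with the contributions from $\varphi(P)$ and $\varphi(u_{n_1})$ to force $2^s\mid m$; since also $2^s\mid n$, the conclusion $2^s\mid l$ follows. You instead exploit the much cruder numerical input $k\geq 416$ (already established in Section~3) together with the already-proved bound $l\leq 12$: the large value of $k$ pushes $\nu_2(m)$ past $414$, while $l\leq 12$ caps $\nu_2(l)$ at $3$, and the mismatch forces $\nu_2(n)=\nu_2(l)$ exactly. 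Your route is more elementary in that it avoids primitive divisors entirely, and it actually yields the sharper statement $2^s\parallel l$; the paper's argument, on the other hand, is more self-contained and would still work even without the quantitatively strong lower bound on $k$.
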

\begin{proof}
We may assume $s\geq 2$. Let $q_1,q_2,\ldots,q_{2s-2}$ be primitive prime factors of $u_4,u_8,u_{16},\ldots,u_{2^s},u_{4n_1}$, $u_{8n_1},\ldots,u_{2^sn_1}$ respectively. Then $q_1,q_2,\ldots,q_{2s-2}$ must all be odd. We have
\begin{equation*}
Pq_1q_2\cdots q_{2s-2}u_{n_1}\mid u_n,
\end{equation*}
and so 
\begin{equation*}
\varphi(P)\left(q_1-1\right)\left(q_2-1\right)\cdots\left(q_{2s-2}-1\right)\varphi\left(u_{n_1}\right)\mid u_m.
\end{equation*}
So we have $2^{2s-2}\varphi(P)\varphi\left(u_{n_1}\right)\mid u_m$. Suppose $P$ is odd. Then we have $2^{2s}\mid u_m$ since $P\geq 3$ and $u_{n_1}\geq u_3>2$. Thus, by Lemma \ref{lem9}, we have $2^{2s-2}\mid m$. Since $s\geq 2$, this implies that $2^s\mid m$, so that $2^s\mid\gcd(m,n)$. Thus $2^s\mid l$. Now suppose $P$ is even. Then $2^{2s-2+t_1}\mid u_m$, so that $2^{2s-1}\mid m$, by Lemma \ref{lem9}. Then $2^s\mid m$, so that again $2^s\mid l$. From our possible values of $l$, we can therefore see that $s\leq 3$.
\end{proof}
\begin{lemma}\label{lem12}
If $r$ is an odd prime with $r^{\lambda}\mid n$ where $\lambda>e_r$, then $r^{\lambda-e_r}\mid l$.
\end{lemma}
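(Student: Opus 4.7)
The plan is to extract a divisibility statement for $m$ out of $\varphi(u_n)=u_m$ by computing the $r$-adic valuation on both sides, and then combine this with the hypothesis $r^\lambda\mid n$ to obtain divisibility of $l=n-m$. The only tools I need are Lemma \ref{lem2} (applied twice: once to $u_n$ and once to $u_m$) together with Lemma \ref{lem18}, which forces $\nu_r(z(r))\in\{0,1\}$, with equality exactly when $r\mid D$.

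Taking $r$ to appear among the $q_i$ (i.e.\ $r\mid u_n$, equivalently $z(r)\mid n$, which is the relevant case in our setup), Lemma \ref{lem2} gives
\begin{equation*}
\nu_r(u_n)=e_r+\nu_r(n)-\nu_r(z(r))\geq e_r+\lambda-1.
\end{equation*}
From the factorization $u_n=\prod q_i^{\alpha_i}$ and $\varphi(u_n)=\prod q_i^{\alpha_i-1}(q_i-1)$, the contribution coming just from the $q_i=r$ factor yields
\begin{equation*}
\nu_r(u_m)=\nu_r(\varphi(u_n))\geq \nu_r(u_n)-1\geq e_r+\lambda-2\geq e_r\geq 1,
\end{equation*}
since $\lambda\geq e_r+1\geq 2$. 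Hence $r\mid u_m$, so $z(r)\mid m$, and a second application of Lemma \ref{lem2} gives $\nu_r(u_m)=e_r+\nu_r(m)-\nu_r(z(r))$. Subtracting this identity from the lower bound above makes the $e_r$ and $\nu_r(z(r))$ terms cancel, producing $\nu_r(m)\geq \lambda-1$.

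Finally, from $\nu_r(n)\geq\lambda$ and $\nu_r(m)\geq\lambda-1$ one gets $\nu_r(l)=\nu_r(n-m)\geq\min(\nu_r(n),\nu_r(m))\geq\lambda-1\geq\lambda-e_r$, which is exactly the desired $r^{\lambda-e_r}\mid l$. The one piece of bookkeeping to be careful about is that $\nu_r(z(r))$ appears on both sides of the totient computation, so the two cases $z(r)=r$ (i.e.\ $r\mid D$) and $z(r)\neq r$ must be handled uniformly; happily this quantity cancels in the subtraction so the proof needs no case split. The gap between $\lambda-1$ (what the argument actually produces) and $\lambda-e_r$ (what the lemma claims) is comfortably positive because $e_r\geq 1$, so the stated bound is already safely inside what the calculation yields.
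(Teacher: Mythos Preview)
Your argument has a genuine gap: you assume $r\mid u_n$ (equivalently $z(r)\mid n$), calling this ``the relevant case in our setup,'' but nothing in the hypotheses of the lemma or in the ambient setup guarantees it. The lemma is applied in the paper to every odd prime divisor $r_i$ of $n$ (see the derivation of Lemma \ref{lem13}), regardless of whether $r_i$ appears among the prime factors $q_j$ of $u_n$; there is no reason why $z(r_i)$ should divide $n$ just because $r_i$ does. When $r\nmid u_n$ your valuation identity $\nu_r(u_n)=e_r+\nu_r(n)-\nu_r(z(r))$ is simply false (the left-hand side is zero), and the whole chain of inequalities collapses.

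The paper's proof sidesteps this entirely by not tracking $\nu_r(u_n)$ at all. It uses that $n$ is even, so $2r^\lambda\mid n$, and then invokes Carmichael's theorem (Lemma \ref{lem15}) to produce a primitive prime factor $q$ of $u_{2r^\lambda}$, which automatically divides $u_n$. One checks $q\mid v_{r^\lambda}$, hence $D$ is a quadratic residue modulo $q$, so by Lemma \ref{lem18} $z(q)\mid q-1$; primitivity forces $z(q)=2r^\lambda$, whence $r^\lambda\mid q-1\mid\varphi(u_n)=u_m$. From there Lemma \ref{lem2} converts $r^\lambda\mid u_m$ into $r^{\lambda-e_r}\mid m$, and the divisibility of $l$ follows. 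Your approach, when the extra hypothesis $r\mid u_n$ happens to hold, actually yields the sharper bound $\nu_r(m)\geq\lambda-1$, but it cannot cover the general statement as written.
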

\begin{proof}
Since $n$ is even, $2r^{\lambda}\mid n$. Let $q$ be a primitive prime factor of $u_{2r^{\lambda}}$. Then $q\mid v_{r^{\lambda}}$. By Lemma \ref{lem1}, we can therefore deduce that $Du_{r_{\lambda}}^2\equiv 4\pmod q$, so that $D$ is a quadratic residue $\pmod{q}$. Therefore, by Lemma \ref{lem18}, $q\mid u_{q-1}$, so that $2r^{\lambda}\mid q-1$. Therefore, $2r^{\lambda}\mid u_m$. From Lemma \ref{lem2}, this implies that $r^{\lambda-e_p}\mid m$. Since $r^{\lambda-e_p}\mid n$, we have that $r^{\lambda-e_p}\mid l$.
\end{proof}
From  our possible values of $l$, we deduce that if $M>1$, then $M=\left\{r_i\right\}$ where $r_i=3$ or $5$, with $\lambda_i\leq e_{r_i}+1$. Altogether, we have proved the following so far.
\begin{lemma}\label{lem13}
If $\varphi\left(u_n\right)=u_m$ with $n>1$ and $n=2^sr_1^{\lambda_1}\cdots r_t^{\lambda_t}$, then $1\leq s\leq 3$, $n>m\geq 2^{416}$, and $\gcd(m,n)=2,4,6,8,10,12$. Also $l=n-m=2,4,6,8,10,12$. Finally, $\lambda_i\leq e_{r_i}$ for all $r_i\nmid m$ and $\lambda_i\leq e_{r_i}+1$ for all $r_i\mid m$.
\end{lemma}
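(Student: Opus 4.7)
The plan is to treat Lemma \ref{lem13} as a consolidation of the results already established in this section, with essentially no new work required; all the heavy lifting has already been done. First, I would note the pieces already in place: the bounds $n > m \geq 2^{416}$ come from the earlier lemma in Section 3, the fact that $n$ is even comes from Section 5, and the conclusion $l = n - m \in \{2,4,6,8,10,12\}$ was just derived from inequality \eqref{eqn11} by the test over even values $l \leq 80$. Combining this with Lemma \ref{lem11}, which gives $2^s \mid l$, I immediately get $1 \leq s \leq 3$ (the lower bound because $n$ is even, the upper because $2^s \leq l \leq 12$).

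Second, for the divisibility statement about $\gcd(m,n)$, I would argue that $\gcd(m,n)$ divides both $m$ and $n$, hence divides $l = n-m$. Since both $m$ and $n$ are even, $\gcd(m,n)$ is an even divisor of some $l \in \{2,4,6,8,10,12\}$, and inspection shows every even divisor of each of these values lies in $\{2,4,6,8,10,12\}$, giving the claim.

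Third, for the bounds on the exponents $\lambda_i$, I would apply Lemma \ref{lem12} directly. Suppose $\lambda_i > e_{r_i}$; then $r_i^{\lambda_i - e_{r_i}} \mid l$. If $r_i \nmid m$, then because $r_i \mid n$ and $r_i \nmid m$ we have $r_i \nmid l$, contradicting $r_i \mid l$; hence $\lambda_i \leq e_{r_i}$. If instead $r_i \mid m$, then $r_i \mid \gcd(m,n) \mid l$, so $r_i$ is an odd prime dividing one of $\{2,4,6,8,10,12\}$, forcing $r_i \in \{3,5\}$. When $r_i = 3$ the constraint $3^{\lambda_i - e_3} \mid l$ with $l \in \{6,12\}$ yields $\lambda_i - e_3 \leq 1$; when $r_i = 5$ the constraint $5^{\lambda_i - e_5} \mid l$ with $l = 10$ yields $\lambda_i - e_5 \leq 1$. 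In both subcases $\lambda_i \leq e_{r_i} + 1$, as claimed. As a by-product, at most one such $r_i$ can occur and it must be $3$ or $5$ (since $15 \nmid l$), recovering the parenthetical observation about $M$ preceding the lemma.

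There is no real main obstacle here: the lemma is a packaging step, and the only thing one must do carefully is the bookkeeping between "$r_i \mid m$" versus "$r_i \nmid m$" via the identity $\gcd(m,n) \mid l$, together with a small finite case analysis against the six possible values of $l$. The genuinely difficult inputs, namely the bound $l \leq 12$ from \eqref{eqn11} and the structural results Lemmas \ref{lem11} and \ref{lem12}, are invoked as black boxes.
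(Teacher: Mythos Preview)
Your proposal is correct and matches the paper's approach: Lemma~\ref{lem13} is indeed a consolidation statement, and the paper itself offers no separate proof beyond the remark ``Altogether, we have proved the following so far,'' relying on exactly the ingredients you list (the $l\le 12$ conclusion from \eqref{eqn11}, Lemma~\ref{lem11} for $2^s\mid l$, and Lemma~\ref{lem12} for the exponent bounds). Your bookkeeping for the $\gcd(m,n)$ claim and the case split on $r_i\mid m$ versus $r_i\nmid m$ fills in precisely the details the paper leaves implicit.
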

\section{The Case of $P\geq 80$}
Here we completely eliminate the possibility of $P\geq 80$. First, we derive a lower bound on $L_2$.
\begin{lemma}\label{lem14}
For all $P\geq 3$, we have $L_2>\log\alpha-\log\log\alpha-2.163$ if $M=1$ and $L_2>6\log\alpha-\log\log\alpha-5.894$ if $M>1$.
\end{lemma}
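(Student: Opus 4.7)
The plan is to rearrange \eqref{eqn12} as $L_2 > l\log\alpha - \log\log\alpha - 2.163 - L_1$ and then to control $L_1$ in each case.

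First I would handle $M=1$: here $n'=2^s$, Lemma \ref{lem11} gives $s\leq 3$, so every divisor of $n'$ is at most $8$. Since $P\geq 3$ yields $\alpha^2>10>8$, no divisor of $n'$ exceeds $\alpha^2$, whence $L_1=0$. Because $m$ is even and $n>m$, we have $l\geq 2$, and the desired bound $L_2>\log\alpha-\log\log\alpha-2.163$ follows immediately from $\log\alpha>0$.

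For $M>1$, the discussion right after Lemma \ref{lem12} gives $M\in\{3,5\}$; together with $M\mid l$ and $l\in\{2,4,6,8,10,12\}$, this forces $l\geq 6$. I would bound $L_1$ exactly as in the derivation leading to \eqref{eqn10}, first with Lemma \ref{lem6} and then Lemma \ref{lem8} on $4n'$, but sharpened by subtracting all divisors $d\leq 9$ of $4n'$ (permissible since $\alpha^2>9$ for $P\geq 3$), not merely $d\in\{2,4,8\}$. In the critical subcase $M=3$ with $\lambda_1\geq 2$, the divisors of $4n'$ at most $9$ are exactly $\{1,2,3,4,6,8,9\}$, and a direct computation yields
\begin{equation*}
L_1 < 6\log 2 + 3\log 3 - \left(\tfrac{37}{12}\log 2 + \tfrac{13}{9}\log 3\right) = \tfrac{35}{12}\log 2 + \tfrac{14}{9}\log 3 < 3.731.
\end{equation*}
Since $3.731+2.163=5.894$ and $l\geq 6$, this yields $L_2 > 6\log\alpha - \log\log\alpha - 5.894$. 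The subcase $M=3$, $\lambda_1=1$ is handled by noting that the only divisors of $n'=2^s\cdot 3$ that can exceed $\alpha^2$ are $12$ and $24$, making $L_1$ manifestly well under $3.731$.

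For $M=5$ (so $l=10$), the same method with subtraction set $\{1,2,4,5,8\}$ gives a slightly weaker bound on $L_1$, but the extra $4\log\alpha$ coming from $l=10$ rather than $l=6$ absorbs it: one only needs $4\log\alpha>L_1-3.731$, which holds easily since $\log\alpha>1.19$. The main obstacle, and what forces the constant $5.894$ in the statement, is the precise bookkeeping in the $M=3$, $\lambda_1\geq 2$ subcase — the divisors of $4n'$ at most $9$ and their contribution to the subtracted sum must be computed carefully so that the bound $L_1<3.731$ emerges sharply.
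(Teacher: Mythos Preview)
Your proposal is correct and follows essentially the same route as the paper: rearrange \eqref{eqn12}, show $L_1=0$ when $M=1$, and for $M>1$ bound $L_1$ via Lemma~\ref{lem6} and Lemma~\ref{lem8} with a subtraction of the small divisors, using $M\mid l$ to get $l\ge 6$ (resp.\ $l=10$). The only cosmetic difference is that the paper passes to $12n'$ in the $M=3$ case so that $9\mid 12n'$ automatically, thereby avoiding your case split on $\lambda_1$; for $M=5$ the paper subtracts $d=10$ as well (since $\alpha^2>10$) and gets $L_1<2.468$, but as you note the surplus $4\log\alpha$ makes this immaterial.
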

\begin{proof}
First, by \eqref{eqn12}, we have
\begin{equation*}
L_2>l\log\alpha-\log\log\alpha-2.163-L_1.
\end{equation*}
By Lemma \ref{lem13}, we know that $M=1,3,5$. Consider the case of $M=1$. Then we have
\begin{equation*}
L_1=\sum_{\substack{d\mid n\\r\mid d\implies r\mid 2\\d>\alpha^2}}T_d
\end{equation*}
All divisors $d$ in the above sum are powers of $2$. However, the highest power of $2$ dividing $n$ is at most $8$ by Lemma \ref{lem11}, and $8<\alpha^2$. Therefore, $L_1=0$ and the result follows for this case. Consider $M=3$. Then
\begin{align*}
L_1&\leq\sum_{\substack{d\mid n'\\d>\alpha^2}}\frac{2\log d}{d}\\
&\leq\sum_{\substack{d\mid 12n'\\d>\alpha^2}}\frac{2\log d}{d}\\
&\leq\left(\sum_{d\mid 12n'}\frac{2\log d}{d}\right)-\log 2-\frac{2\log 3}{3}-\frac{\log 4}{2}-\frac{\log 6}{3}-\frac{\log 8}{4}-\frac{2\log 9}{9}\\
&<2\left(\sum_{r\mid 2M}\frac{\log r}{r-1}\right)\left(\frac{2M}{\varphi(2M)}\right)-\log 2-\frac{2\log 3}{3}-\frac{\log 4}{2}-\frac{\log 6}{3}-\frac{\log 8}{4}-\frac{2\log 9}{9}\\
&=6\left(\log 2+\frac{\log(3)}{2}\right)-\log 2-\frac{2\log 3}{3}-\frac{\log 4}{2}-\frac{\log 6}{3}-\frac{\log 8}{4}-\frac{2\log 9}{9}\\
&<3.731.
\end{align*}
Then
\begin{equation*}
L_2>6\log\alpha-\log\log\alpha-5.894
\end{equation*}
Likewise, if $M=5$, we have
\begin{align*}
L_1&\leq\sum_{\substack{d\mid n'\\d>\alpha^2}}\frac{2\log d}{d}\\
&\leq\sum_{\substack{d\mid 4n'\\d>\alpha^2}}\frac{2\log d}{d}\\
&\leq\left(\sum_{d\mid 4n'}\frac{2\log d}{d}\right)-\log 2-\frac{\log 4}{2}-\frac{2\log 5}{5}-\frac{\log 8}{4}-\frac{\log 10}{5}\\
&<2\left(\sum_{r\mid 2M}\frac{\log r}{r-1}\right)\left(\frac{2M}{\varphi(2M)}\right)-\log 2-\frac{\log 4}{2}-\frac{2\log 5}{5}-\frac{\log 8}{4}-\frac{\log 10}{5}\\
&=5\left(\log 2+\frac{\log 5}{4}\right)-\log 2-\frac{\log 4}{2}-\frac{2\log 5}{5}-\frac{\log 8}{4}-\frac{\log 10}{5}\\
&<2.468,
\end{align*}
so that
\begin{equation*}
L_2>10\log\alpha-\log\log\alpha-4.631>6\log\alpha-\log\log\alpha-5.894.
\end{equation*}
The result follows.
\end{proof}
We divide into two cases: $r_j\leq P$ or $r_j>P$.
\subsection{The Case of $r_j\leq P$}
By \eqref{eqn12}, we have
\begin{equation*}
2\log\alpha<\log\log\alpha+2.163+\sum_{\substack{d\mid n\\d>\alpha^2}}S_d.
\end{equation*}
For divisors $d$ involved in the above sum, we have $d>\alpha^2>80^2>40$, so that Lemma \ref{lem16} applies. We have
\begin{equation*}
S_d<\frac{3.204}{d}+\frac{1}{d\log d}+\frac{4\log\log d}{\varphi(d)}+\frac{4\log\log\alpha}{\varphi(d)\log d}.
\end{equation*}
Lemma \ref{lem5} gives
\begin{equation*}
S_d<\frac{3.204}{d}+\frac{1}{d\log d}+\frac{7.16(\log\log d)^2}{d}+\frac{7.16\log\log d}{d\log d}\log\log\alpha+\frac{10}{d}+\frac{10\log\log\alpha}{d(\log d)(\log\log d)}.
\end{equation*}
Since $d>\alpha^2$, we deduce
\begin{equation*}
S_d<\frac{3.204}{\alpha^2}+\frac{1}{2\alpha^2\log\alpha}+\frac{7.16\left(\log\log\left(\alpha^2\right)\right)^2}{\alpha^2}+\frac{7.16\log\log\left(\alpha^2\right)}{2\alpha^2\log\alpha}\log\log\alpha+\frac{10}{\alpha^2}+\frac{10\log\log\alpha}{2\alpha^2\log\alpha\left(\log\log\left(\alpha^2\right)\right)}.
\end{equation*}
Since $r_j\leq P$, by \eqref{eqn9}, we have
\begin{equation*}
\tau\left(n_1\right)\leq\frac{(\alpha+1)\log\alpha+0.2}{\log\alpha}
\end{equation*}
Since $s\leq 3$, this gives
\begin{equation*}
\tau(n)\leq 4\tau\left(n_1\right)\leq\frac{4(\alpha+1)\log\alpha+0.8}{\log\alpha}=4(\alpha+1)+\frac{0.8}{1.19}=4\alpha+4.673<5\alpha.
\end{equation*}
Therefore, since $\alpha>80$, we have
\begin{align*}
\sum_{\substack{d\mid n\\d>\alpha^2}}S_d&<\frac{16.02}{\alpha}+\frac{2.5}{\alpha\log\alpha}+\frac{35.8\left(\log\log\left(\alpha^2\right)\right)}{\alpha}+\frac{17.9\log\log\left(\alpha^2\right)}{\alpha\log\alpha}\log\log\alpha+\frac{50}{\alpha}+\frac{25\log\log\alpha}{\alpha\log\alpha\left(\log\log\left(\alpha^2\right)\right)}\\
&<2.956.
\end{align*}
Thus
\begin{equation*}
2\log\alpha<\log\log\alpha+2.163+2.956=\log\log\alpha+5.119.
\end{equation*}
But since $\alpha>80$, we also have
\begin{equation*}
2\log\alpha-\log\log\alpha>2\log 80-\log\log 80>7.286,
\end{equation*}
a contradiction.
\subsection{The Case of $r_j>P$}
For this case, we again divide into two cases: $M=1$ and $M>1$.
\setcounter{case}{0}
\begin{case}{$M=1$}
\newline
\newline
Since $M=1$, we have $r_j=r_1$. By Lemma \ref{lem14}, we have
\begin{equation}\label{eqn36}
2\log\alpha-\log\log\alpha-2.163<\sum_{\substack{\substack{d\mid n\\r_i\mid d\text{ for some }i\in\mathcal{J}}}}S_d. 
\end{equation}
For every divisor $d$ involved in the above sum, we have $d\geq r_1>P\geq 80$, so Lemma \ref{lem16} applies. Since $\alpha-P<1$, we have $r_1>\alpha$. Also, since $r_1$ is prime, we have $r_1\geq 83$. We have
\begin{align*}
S_d&<\frac{3.204}{d}+\frac{1}{d\log d}+\frac{4\log\log d}{\varphi(d)}+\frac{4\log\log\alpha}{\varphi(d)\log d}\\
&<\frac{3.204}{\varphi(d)}+\frac{1}{\varphi(d)\log 80}+\frac{4\log\log d}{\varphi(d)}+\frac{4\log\log\alpha}{\varphi(d)\log\alpha}\\
&<\frac{3.204}{\varphi(d)}+\frac{1}{\varphi(d)\log 80}+\frac{4\log\log d}{\varphi(d)}+\frac{4\log\log 80}{\varphi(d)\log 80}\\
&<\frac{4.781}{\varphi(d)}+\frac{4\log\log d}{\varphi(d)}\\
&<\frac{7.236\log\log d}{\varphi(d)}.
\end{align*}
Notice that every divisor $d$ in the sum in \eqref{eqn36} can be written as $2^ad_1$ where $0\leq a\leq 3$ and $d_1$ is a divisor of $n_1$. Therefore, we have
\begin{align*}
L_2&<7.236\sum_{\substack{d\mid n_1\\d>1}}\left(\frac{\log\log d}{\varphi(d)}+\frac{\log\log(2d)}{\varphi(2d)}+\frac{\log\log(4d)}{\varphi(4d)}+\frac{\log\log (8d)}{\varphi(8d)}\right)\\
&=7.236\sum_{\substack{d\mid n_1\\d>1}}\left(\frac{\log\log d}{\varphi(d)}+\frac{\log\log(2d)}{\varphi(d)}+\frac{\log\log(4d)}{2\varphi(d)}+\frac{\log\log (8d)}{4\varphi(d)}\right).
\end{align*}
For $1\leq a\leq 3$, we have
\begin{align*}
\log\log\left(2^ad\right)&=\log\log d+\log\left(1+\frac{a\log 2}{\log d}\right)\\
&<\log\log d+\frac{a\log 2}{\log d}\\
&\leq\log\log d\left(1+\frac{a\log 2}{(\log 83)(\log\log 83)}\right)
\end{align*}
We deduce
\begin{equation}\label{eqn23}
L_2<22\sum_{\substack{d\mid n_2\\d>1}}\frac{\log\log d}{\varphi(d)}.
\end{equation}
By \eqref{eqn9}, we have
\begin{equation}\label{eqn24}
\tau\left(n_1\right)<\frac{\left(r_1+1\right)\log r_1+0.2}{\log r_1}<2r_1,
\end{equation}
so that
\begin{equation*}
t<\frac{\log\left(\tau\left(n_1\right)\right)}{\log 2}<\frac{\log\left(2r_1\right)}{\log 2}:=g\left(r_1\right).
\end{equation*}
By Lemma $4.8$ in \cite{chen}, we have that the function $\log\log\left(x_1x_2\right)\leq\log\log\left(x_1\right)\log\log\left(x_2\right)$ for any pair of real numbers $x_1,x_2\geq 78$. Since $r_1\geq 83$, we therefore deduce the following:
\begin{align*}
\sum_{\substack{d\mid n_1\\d>1}}\frac{\log\log d}{\varphi(d)}&<\prod_{i=1}^t\left(1+\frac{\log\log\left(r_i\right)}{\varphi\left(r_i\right)}+\frac{\log\log\left(r_i^2\right)}{\varphi\left(r_i^2\right)}+\ldots\right)-1\\
&\leq\prod_{i=1}^t\left(1+\frac{\log\log\left(r_i\right)}{\varphi\left(r_i\right)}+\frac{\left(\log\log\left(r_i\right)\right)^2}{\varphi\left(r_i^2\right)}+\ldots\right)-1\\
&=\prod_{i=1}^t\left(1+\frac{\log\log r_i}{r_i-1}\cdot\frac{1}{1-\displaystyle\frac{\log\log r_i}{r_i}}\right)-1\\
&\leq\left(1+\frac{\log\log r_1}{r_1-1}\cdot\frac{1}{1-\displaystyle\frac{\log\log r_1}{r_1}}\right)^{g\left(r_1\right)}-1\\
&:=T-1.
\end{align*}
By $r_1\geq 83$,
\begin{align*}
\log T&=g\left(r_1\right)\log\left(1+\frac{\log\log r_1}{r_1-1}\cdot\frac{1}{1-\displaystyle\frac{\log\log r_1}{r_1}}\right)\\
&<g\left(r_1\right)\frac{\log\log r_1}{r_1-1}\cdot\frac{1}{1-\displaystyle\frac{\log\log r_1}{r_1}}\\
&=\frac{\log\left(2r_1\right)}{\log 2}\cdot\frac{\log\log r_1}{r_1-1}\cdot\frac{r_1}{r_1-\log\log r_1}\\
&=\frac{1}{\log 2}\cdot\frac{\log\left(2r_1\right)}{\sqrt{r_1}}\cdot\frac{\log\log r_1}{\sqrt{r_1}}\cdot\frac{r_1}{r_1-1}\cdot\frac{r_1}{r_1-\log\log r_1}\\
&\leq\frac{1}{\log 2}\cdot\frac{\log 166}{\sqrt{83}}\cdot\frac{\log\log 83}{\sqrt{83}}\cdot\frac{83}{82}\cdot\frac{83}{83-\log\log 83}\\
&<0.137.
\end{align*}
Thus,
\begin{equation*}
\sum_{\substack{d\mid n_1\\d>1}}\frac{\log\log d}{\varphi(d)}<e^{0.137}-1<0.147.
\end{equation*}
Therefore,
\begin{equation*}
L_2<22\cdot 0.147<3.234.
\end{equation*}
Hence,
\begin{equation*}
2\log\alpha-\log\log\alpha<5.397.
\end{equation*}
But since $\alpha>80$, we also have
\begin{equation*}
2\log\alpha-\log\log\alpha>2\log 80-\log\log 80>7.286,
\end{equation*}
a contradiction.
\end{case}
\begin{case}{$M>1$}
\newline
\newline
By Lemma \ref{lem14}, we have
\begin{equation*}
6\log\alpha-\log\log\alpha-5.894<\sum_{\substack{\substack{d\mid n\\r_i\mid d\text{ for some }i\in\mathcal{J}}}}S_d. 
\end{equation*}
For every divisor $d$ involved in the above sum, we can again derive that
\begin{equation*}
S_d<\frac{7.236\log\log d}{\varphi(d)}.
\end{equation*}
Wlog let $M=r_1$, $r_j=r_2$, and $n_2:=\frac{n_1}{r_1^{\lambda_1}}$. Again, we can similarly derive that
\begin{align*}
\sum_{\substack{d\mid n\\r_i\mid d\text{ for some }i\in\mathcal{J}}}S_d&<22\sum_{\substack{d\mid n_2\\d>1}}\sum_{y=1}^{\infty}\frac{\log\log\left(M^yd\right)}{\varphi\left(M^yd\right)}\\
&=22\sum_{\substack{d\mid n_2\\d>1}}\sum_{y=0}^{\infty}\frac{\log\log\left(M^yd\right)}{(M-1)M^{y-1}\varphi(d)}.
\end{align*}
For all $x\geq 83$, we can see that
\begin{equation*}
\frac{\log\log(3x)}{\log\log x}=1+\frac{\log\left(1+\frac{\log 3}{\log x}\right)}{\log\log x}\leq 1+\frac{\log\left(1+\frac{\log 3}{\log 83}\right)}{\log\log 83}<1.15.
\end{equation*}
Therefore,
\begin{align}
\sum_{\substack{d\mid n\\r_i\mid d\text{ for some }i\in\mathcal{J}}}S_d&<22\sum_{\substack{d\mid n_2\\d>1}}\sum_{y=0}^{\infty}\frac{1.15^y\log\log(d)}{2\cdot 3^{y-1}\varphi(d)}\nonumber\\
&<33\sum_{\substack{d\mid n_2\\d>1}}\frac{\log\log(d)}{\varphi(d)}\sum_{y=0}^{\infty}\left(\frac{1.15}{3}\right)^y\nonumber\\
&<53.52\sum_{\substack{d\mid n_2\\d>1}}\frac{\log\log(d)}{\varphi(d)}.\label{eqn38}
\end{align}
As before, we can derive that
\begin{equation*}
\sum_{\substack{d\mid n_2\\d>1}}\frac{\log\log(d)}{\varphi(d)}<0.147.
\end{equation*}
Therefore,
\begin{equation*}
6\log\alpha-\log\log\alpha-5.894<53.52\cdot 0.147,
\end{equation*}
so that
\begin{equation*}
6\log\alpha-\log\log\alpha<13.77.
\end{equation*}
But since $\alpha>80$, we also have
\begin{equation*}
6\log\alpha-\log\log\alpha>6\cdot\log 80-\log\log 80>24.81,
\end{equation*}
a contradiction.
\end{case}
\section{The Case of $3\leq P\leq 79$}
We now deal with the case of $3\leq P\leq 79$.
\begin{lemma}
If $3\leq P\leq 79$ and $d\mid n$ with $d\geq 173$, then
\begin{equation*}
\log\prod_{z(p)=d}\left(1+\frac{1}{p-1}\right)<\frac{7.39\log\log d}{\varphi(d)}
\end{equation*}
\end{lemma}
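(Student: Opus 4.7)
The plan is to bound the product by first applying the termwise inequality $\log(1+x)<x$, then relating the resulting sum $\sum_{p\in\mathcal{P}_d}\frac{1}{p-1}$ to $S_d=\sum_{p\in\mathcal{P}_d}\frac{1}{p}$ (which is already controlled by Lemma \ref{lem16}), and finally pushing everything through a direct numerical estimate under the hypotheses $d\geq 173$ and $3\leq P\leq 79$.

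First I would note that every $p\in\mathcal{P}_d$ satisfies $p\geq d-1$: by Lemma \ref{lem18}, either $p\mid D$, giving $z(p)=p=d$, or $z(p)\mid p\pm 1$, giving $d\mid p\pm 1$ and hence $p\geq d-1$. Writing $\frac{1}{p-1}=\frac{1}{p}+\frac{1}{p(p-1)}$ and using $p-1\geq d-2$ in the second term yields
\begin{equation*}
\sum_{p\in\mathcal{P}_d}\frac{1}{p-1}\leq S_d\left(1+\frac{1}{d-2}\right)=\frac{d-1}{d-2}\,S_d\leq \frac{172}{171}\,S_d
\end{equation*}
whenever $d\geq 173$. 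Combined with $\log(1+x)<x$ this gives $\log\prod_{p\in\mathcal{P}_d}\!\bigl(1+\tfrac{1}{p-1}\bigr)<\tfrac{172}{171}S_d$.

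Next, since $d\geq 173\geq 40$, Lemma \ref{lem16} applies and gives
\begin{equation*}
S_d<\frac{3.204}{d}+\frac{1}{d\log d}+\frac{4\log\log d}{\varphi(d)}+\frac{4\log\log\alpha}{\varphi(d)\log d}.
\end{equation*}
I would then factor out $\frac{\log\log d}{\varphi(d)}$ after multiplying by $\frac{172}{171}$, and bound the remaining terms using $\varphi(d)\leq d$, $\log d\geq\log 173$, $\log\log d\geq\log\log 173$, along with $\log\log\alpha\leq\log\log 80$ (valid because $P\leq 79$ implies $\alpha<P+\tfrac{1}{P}<80$). The resulting numerical check amounts to verifying
\begin{equation*}
\frac{172}{171}\left(\frac{3.204}{\log\log 173}+\frac{1}{\log 173\cdot\log\log 173}+4+\frac{4\log\log 80}{\log 173\cdot\log\log 173}\right)<7.39,
\end{equation*}
which holds comfortably (the left side is well under $6.9$). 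This gives the desired inequality.

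The main obstacle is purely numerical tightness rather than any conceptual hurdle: the constant $7.39$ is only modestly larger than the dominant contribution $4$ coming from the main term $\frac{4\log\log d}{\varphi(d)}$ in Lemma \ref{lem16}, so one must be careful to use $d\geq 173$ and $\alpha<80$ to absorb the $\frac{3.204}{d}$, $\frac{1}{d\log d}$, and $\frac{4\log\log\alpha}{\varphi(d)\log d}$ contributions into an overall slack of less than $3.4$. Apart from Lemmas \ref{lem16} and \ref{lem18}, no further input from the sequence theory is needed.
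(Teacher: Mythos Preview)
Your proof is correct and follows essentially the same approach as the paper: both use $\log(1+x)<x$, the decomposition $\tfrac{1}{p-1}=\tfrac{1}{p}+\tfrac{1}{p(p-1)}$, the fact that $p\geq d-1$ for $p\in\mathcal{P}_d$, Lemma~\ref{lem16}, and the numerical inputs $d\geq 173$, $\alpha<80$. The only difference is cosmetic: the paper bounds $\sum_{p\in\mathcal{P}_d}\tfrac{1}{p(p-1)}<\sum_{k\geq d-1}\tfrac{1}{k(k-1)}=\tfrac{1}{d-2}$ additively, obtaining $\tfrac{1.012}{d}+S_d$, whereas you bound it by $\tfrac{S_d}{d-2}$, obtaining the multiplicative factor $\tfrac{172}{171}$ on $S_d$; both lead comfortably to the constant $7.39$.
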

\begin{proof}
We have the following:
\begin{align*}
\log\prod_{z(p)=d}\left(1+\frac{1}{p-1}\right)&=\sum_{l_p=d}\log\left(1+\frac{1}{p-1}\right)\\
&<\sum_{z(p)=d}\frac{1}{p-1}\\
&=\sum_{z(p)=d}\frac{1}{p(p-1)}+\sum_{z(p)=d}\frac{1}{p}\\
&<\sum_{n=d-1}^{\infty}\frac{1}{n(n-1)}+\sum_{z(p)=d}\frac{1}{p}\\
&=\frac{1}{d-2}+S_d\\
&<\frac{1.012}{d}+S_d.
\end{align*}
By Lemma \ref{lem16}, we therefore have
\begin{align*}
\log\prod_{z(p)=d}\left(1+\frac{1}{p-1}\right)&<\frac{4.216}{d}+\frac{1}{d\log d}+\frac{4\log\log d}{\varphi(d)}+\frac{4\log\log\alpha}{\varphi(d)\log d}\\
&\leq\frac{4.216}{\varphi(d)}+\frac{1}{\varphi(d)\log 173}+\frac{4\log\log d}{\varphi(d)}+\frac{4\log\log 80}{\varphi(d)\log 173}\\
&<\frac{5.557}{\varphi(d)}+\frac{4\log\log d}{\varphi(d)}\\
&<\frac{7.39\log\log d}{\varphi(d)}.
\end{align*}
\end{proof}
\begin{lemma}
We have $r_j<173$.
\end{lemma}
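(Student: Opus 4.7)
The plan is to argue by contradiction, assuming $r_j \ge 173$. Under this assumption every divisor $d \mid n$ containing some prime of $\mathcal{J}$ automatically satisfies $d \ge r_j \ge 173$, so the preceding lemma gives the termwise bound $\log\prod_{z(p)=d}(1+1/(p-1)) < 7.39\log\log d/\varphi(d)$. This plays the role, in the $3 \le P \le 79$ regime, of the bound $S_d < 7.236\log\log d/\varphi(d)$ that powered the $P \ge 80$ analysis in Section 7.

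I would then rerun the derivation of Section 7 essentially verbatim, with $\log\prod_{z(p)=d}(1+1/(p-1))$ in place of $S_d$ and the new constant $7.39$ in place of $7.236$. Writing each $\mathcal{J}$-divisor as $d = 2^a d_1$ with $0 \le a \le s$ and $d_1 \mid n_1$ having some $\mathcal{J}$-prime factor, and using the estimate $\tau(n_1) < 2r_j$ from \eqref{eqn24} (still valid because $r_j \ge 173 > 79 \ge P$) together with Lemma 4.8 of \cite{chen}, I would convert the resulting sum into a product over primes $r_i \ge r_j$. The hypothesis $r_j \ge 173$, rather than the $r_j \ge 83$ used in Section 7, makes every factor in this product significantly closer to $1$, yielding sharper upper bounds of the form $\Sigma_{\mathcal{J}} < C_M$ for explicit constants $C_M$, case-split on $M \in \{1, 3, 5\}$.

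Combining these upper bounds with the matching lower bounds on $L_2$ supplied by Lemma \ref{lem14} (using the parity-forced $l \ge 2, 6, 10$ in the three cases of $M$) leaves an inequality on $l\log\alpha - \log\log\alpha$ that must be checked against $\alpha \ge P \ge 3$. The main obstacle is precisely that the generous slack $\log\alpha \ge \log 80$ used in Section 7 has evaporated; every ounce of the contradiction must now be extracted from the improvement in the preceding lemma over Lemma \ref{lem16}, powered by the raised threshold $r_j \ge 173$ (which both tightens the $7.39\log\log d/\varphi(d)$ bound termwise and, through $\tau(n_1) < 2 r_j$, shrinks the product in which the sum telescopes). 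Verifying the numerics in each of the finitely many subcases $(M, l)$ — and, if needed, compensating on the $L_1$ side since Lemma \ref{lem14}'s lower bound already absorbs $L_1$ — is where I expect the delicate part of the argument to live.
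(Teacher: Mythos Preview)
Your approach has a genuine numerical gap at the bottom of the range $3\le P\le 79$. For $M=1$ and (say) $P=3$, Lemma~\ref{lem14} gives only
\[
L_2>2\log\alpha-\log\log\alpha-2.163\approx 2.388-0.175-2.163\approx 0.05,
\]
while running the Section~7 machinery with the improved constant $7.39$ and the threshold $r_j\ge 173$ yields an upper bound of roughly $21.99\cdot 0.086<1.9$ (this is exactly the quantity the paper computes as the right side of its inequality~\eqref{eqn37}). There is no contradiction between $0.05$ and $1.9$; the slack $2\log\alpha-\log\log\alpha$ that was $>7.28$ for $\alpha>80$ collapses to about $2.2$ for $P=3$, and tightening $r_j$ from $83$ to $173$ does not come close to recovering that factor-of-three loss. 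The same problem persists throughout the lower end of the $P$-range, and the $M>1$ cases are no better.

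The paper avoids this by abandoning the $L_1$/$L_2$ split altogether in this lemma. Instead of invoking Lemma~\ref{lem14}, it starts from $\alpha^{2^s}\le\alpha^l<u_n/\varphi(u_n)$ (using $2^s\mid l$ from Lemma~\ref{lem11}), factors out the small-divisor contribution \emph{exactly} as $u_{2^s}/\varphi(u_{2^s})$ (respectively $u_{2^sr_1^{\lambda_1}}/\varphi(u_{2^sr_1^{\lambda_1}})$ when $M>1$), and is left with the inequality $\log\bigl(\alpha^{2^s}\varphi(u_{2^s})/u_{2^s}\bigr)<1.892$. This is then refuted by direct computation for $3\le P\le 5$ and by part~3) of Lemma~\ref{lem5} for $6\le P\le 79$. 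The key idea you are missing is that for small $P$ the ``$L_1$'' information cannot be discarded or crudely bounded: one must keep track of the exact value of $u_{2^s}/\varphi(u_{2^s})$ (or its $M>1$ analogue) and play it off against $\alpha^{2^s}$.
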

\begin{proof}
Suppose for a contradiction that $r_j\geq 173$. We divide into cases: $M=1$ and $M>1$.
\setcounter{case}{0}
\begin{case}{$M=1$}
\newline
\newline
Again, in this case, we have $r_j=r_1$. By Lemma \ref{lem11}, we have
\begin{equation*}
\alpha^{2^s}\leq\alpha^l<\prod_{l_p\mid n}\left(1+\frac{1}{p-1}\right)
\end{equation*}
Since $M=1$ and the only factors of $n$ less than $173$ are the factors of $2^s$, we have
\begin{equation*}
\alpha^{2^s}<\frac{u_{2^s}}{\varphi\left(u_{2^s}\right)}\prod_{\substack{l_p\mid n\\l_p\geq 173}}\left(1+\frac{1}{p-1}\right).
\end{equation*}
So
\begin{align*}
\log\left(\frac{\alpha^{2^s}\varphi\left(u_{2^s}\right)}{u_{2^s}}\right)&<\sum_{\substack{l_p\mid n\\l_p\geq 173}}\log\left(1+\frac{1}{p-1}\right)\\
&<\sum_{\substack{d\mid n\\d\geq 173}}\frac{7.39\log\log d}{\varphi(d)}.
\end{align*}
Similarly to how we derived \eqref{eqn23}, we can derive that
\begin{equation*}
\sum_{\substack{d\mid n\\d\geq 173}}\frac{7.39\log\log d}{\varphi(d)}<21.99\sum_{d\mid n_1}\frac{\log\log d}{\varphi(d)}
\end{equation*}
As well, we again have
\begin{align*}
\sum_{\substack{d\mid n_1\\d>1}}\frac{\log\log d}{\varphi(d)}&<\left(1+\frac{\log\log r_1}{r_1-1}\cdot\frac{1}{1-\frac{\log\log r_1}{r_1}}\right)^{g\left(r_1\right)}-1\\
&:=T-1.
\end{align*}
By $r_1\geq 173$,
\begin{align*}
\log T&=g\left(r_1\right)\log\left(1+\frac{\log\log r_1}{r_1-1}\cdot\frac{1}{1-\displaystyle\frac{\log\log r_1}{r_1}}\right)\\
&<g\left(r_1\right)\frac{\log\log r_1}{r_1-1}\cdot\frac{1}{1-\displaystyle\frac{\log\log r_1}{r_1}}\\
&=\frac{\log\left(2r_j\right)}{\log 2}\cdot\frac{\log\log r_1}{r_1-1}\cdot\frac{r_1}{r_1-\log\log r_1}\\
&=\frac{1}{\log 2}\cdot\frac{\log\left(2r_j\right)}{\sqrt{r_1}}\cdot\frac{\log\log r_1}{\sqrt{r_1}}\cdot\frac{r_1}{r_1-1}\cdot\frac{r_1}{r_1-\log\log r_1}\\
&\leq\frac{1}{\log 2}\cdot\frac{\log 346}{\sqrt{173}}\cdot\frac{\log\log 173}{\sqrt{173}}\cdot\frac{173}{172}\cdot\frac{173}{173-\log\log 173}\\
&<0.082.
\end{align*}
Thus,
\begin{equation*}
\sum_{\substack{d\mid n_1\\d>1}}\frac{\log\log d}{\varphi(d)}<e^{0.082}-1<0.086.
\end{equation*}
So
\begin{equation}\label{eqn37}
\log\left(\frac{\alpha^{2^s}\varphi\left(u_{2^s}\right)}{u_{2^s}}\right)<21.99\cdot 0.086<1.892.
\end{equation}
It is easy to check that for $1\leq s\leq 3$ and $3\leq P\leq 5$, \eqref{eqn37} doesn't hold, so we may assume that $6\leq P\leq 79$. From \eqref{eqn37}, we have
\begin{equation*}
\frac{\alpha^{2^s}\varphi\left(u_{2^s}\right)}{u_{2^s}}<6.633.
\end{equation*}
From Lemma \ref{lem5}, we have
\begin{equation*}
\frac{\alpha^{2^s}}{\left(1.79\cdot\log\left(\log\left(u_{2^s}\right)\right)+\displaystyle\frac{2.5}{\log\log\left(u_{2^s}\right)}\right)}<6.633.
\end{equation*}
Since $u_{2^s}<\alpha^{2^s-1}$, this implies
\begin{equation*}
\frac{6\cdot u_{2^s}}{\left(1.79\cdot\log\left(\log\left(u_{2^s}\right)\right)+\displaystyle\frac{2.5}{\log\log\left(u_{2^s}\right)}\right)}<6.633.
\end{equation*}
But since $u_{2^s}>6$ we also have
\begin{align*}
\frac{6\cdot u_{2^s}}{\left(1.79\cdot\log\left(\log\left(u_{2^s}\right)\right)+\displaystyle\frac{2.5}{\log\log\left(u_{2^s}\right)}\right)}>\frac{36}{\left(1.79\cdot\log\log 6+\displaystyle\frac{2.5}{\log\log 6}\right)}>6.753,
\end{align*}
a contradiction.
\end{case}
\begin{case}{$M>1$}
\newline
\newline
Again, wlog let $M=r_1$ and $r_j=r_2$. Since the only factors less than $173$ are contained in the set of factors of $2^sr_1^{\lambda_1}$, we have
\begin{equation*}
\alpha^{2^sr_1}<\frac{u_{2^sr_1^{\lambda_1}}}{\varphi\left(u_{2^sr_1^{\lambda_1}}\right)}\prod_{\substack{l_p\mid n\\l_p\geq 173}}\left(1+\frac{1}{p-1}\right).
\end{equation*}
So
\begin{align*}
\log\left(\frac{\alpha^{2^sr_1^{\lambda_1}}\varphi\left(u_{2^s}r_1^{\lambda_1}\right)}{u_{2^sr_1^{\lambda_1}}}\right)&<\sum_{\substack{l_p\mid n\\l_p\geq 173}}\log\left(1+\frac{1}{p-1}\right)\\
&<\sum_{\substack{d\mid n\\d\geq 173}}\frac{7.39\log\log d}{\varphi(d)}
\end{align*}
Similarly to how we derived \eqref{eqn38}, we can derive that
\begin{equation*}
\sum_{\substack{d\mid n\\d\geq 173}}\frac{7.39\log\log d}{\varphi(d)}<53.49\sum_{d\mid n_2}\frac{\log\log d}{\varphi(d)}
\end{equation*}
where, again, $n_2:=n/n'$. As before, we can derive that
\begin{equation*}
\sum_{\substack{d\mid n_2\\d>1}}\frac{\log\log d}{\varphi(d)}<0.086,
\end{equation*}
so that
\begin{equation*}
\log\left(\frac{\alpha^{2^sr_1}\varphi\left(u_{2^sr_1^{\lambda_1}}\right)}{u_{2^sr_1^{\lambda_1}}}\right)<53.49\cdot 0.086<4.601.
\end{equation*}
By Maple, we can check that that $e_3,e_5\leq 5$ for all $3\leq P\leq 79$. So, by Lemma \ref{lem12}, we have ${r_1}^7\nmid n$. Therefore,
\begin{equation}\label{eqn39}
\log\left(\frac{\alpha^{2^sr_1}\varphi\left(u_{2^sr_1^6}\right)}{u_{2^sr_1^6}}\right)<4.601.
\end{equation}
From \eqref{eqn39}, we have
\begin{equation*}
\frac{\alpha^6\varphi\left(u_{8\cdot 5^6}\right)}{u_{8\cdot  5^6}}<\frac{\alpha^{2r_1}\varphi\left(u_{8\cdot r_1^6}\right)}{u_{8\cdot  r_1^6}}<99.59,
\end{equation*}
which implies
\begin{equation*}
\frac{\alpha^6}{1.79\cdot\log\log\left(u_{8\cdot 5^6}\right)+\displaystyle\frac{2.5}{\log\log\left(u_{8\cdot 5^6}\right)}}<99.59.
\end{equation*}
Therefore,
\begin{equation*}
\frac{\alpha^6}{1.79\log\log\left(\alpha^{8\cdot 5^6-1}\right)+\displaystyle\frac{2.5}{\log\log\left(\alpha^{8\cdot 5^6-1}\right)}}<99.59.
\end{equation*}
If $P\geq 4$, then we have $4<\alpha<80$, so that
\begin{equation*}
\frac{\alpha^6}{1.79\log\log\left(\alpha^{8\cdot 5^6-1}\right)+\displaystyle\frac{2.5}{\log\log\left(\alpha^{8\cdot 5^6-1}\right)}}>\frac{4^6}{1.79\log\log\left(80^{8\cdot 5^6-1}\right)+\displaystyle\frac{2.5}{\log\log\left(80^{8\cdot 5^6-1}\right)}}>171.8,
\end{equation*}
a contradiction. So assume $P=3$. Then $3.3<\alpha<3.303$. Since $r_1=3$ or $5$, we can see that $e_{r_1}=1$. Therefore, by Lemma \ref{lem12}, we have $\lambda_1\leq 2$. Therefore, we can similarly see that
\begin{align*}
99.59&>\frac{\alpha^6}{1.79\log\log\left(\alpha^{8\cdot 5^2-1}\right)+\displaystyle\frac{2.5}{\log\log\left(\alpha^{8\cdot 5^2-1}\right)}}\\
&>\frac{3.3^6}{1.79\log\log\left(3.303^{8\cdot 5^2-1}\right)+\displaystyle\frac{2.5}{\log\log\left(3.303^{8\cdot 5^2-1}\right)}}\\
&>125.9,
\end{align*}
a contradiction.
\end{case}
\end{proof}
Given $3\leq P\leq 79$ and $3\leq r_j\leq 167$ (since $167$ is the highest prime less than $173$), we can check with Maple that $e_{r_j}\leq 5$. So, by \eqref{eqn40}, we get
\begin{equation*}
\tau\left(n_1\right)\leq 2e_{r_j}\leq 10,
\end{equation*}
which implies that $t\leq 3$. Also, by \eqref{eqn40}, we get
\begin{equation*}
\tau\left(\frac{n_1}{r_j}\right)\leq e_{r_j}.
\end{equation*}
Hence, if $e_{r_j}=1$, then we have $n_1=r_j$, so that $n\leq 8\cdot n_1=8\cdot r_j\leq 8\cdot 167=1336$, contradicting $n>2^{416}$. Therefore, $e_{r_j}\geq 2$.
\begin{lemma}\label{lem17}
$n_1$ has a prime factor larger than $3\cdot 10^{30}$.
\end{lemma}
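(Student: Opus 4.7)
The plan is to argue by contradiction. Suppose every prime factor of $n_1$ is at most $3\cdot 10^{30}$; I will derive $n_1<2^{413}$, contradicting the bound $n_1=n/2^s>2^{413}$ that follows from $n>2^{416}$ together with $s\leq 3$ from Lemma \ref{lem11}.

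First I marshal the structural constraints established so far: $t\leq 3$, $\tau(n_1)\leq 10$, and $r_j\leq 167$ where $r_j$ is the smallest prime of $\mathcal{J}$. The case $\mathcal{J}=\emptyset$ is handled separately and quickly, for then $M=n_1$ would force, via Lemma \ref{lem13} and the bound $e_{r_1}\leq 5$, the shape $n_1=r_1^{\lambda_1}$ with $r_1\in\{3,5\}$ and $\lambda_1\leq 6$, giving $n_1<2^{413}$ outright. So assume $\mathcal{J}\neq\emptyset$. By Lemma \ref{lem13}, $M\in\{1,3,5\}$; when $M>1$ the prime $r_1$ lies in $\{3,5\}$ and the smallest prime of $\mathcal{J}$, which is then $r_2$, still satisfies $r_2\leq 167$. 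Consequently in every remaining case at least one odd prime divisor of $n_1$ is bounded by $167$.

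The proof then reduces to a finite case analysis on $t$. If $t=1$, then $n_1=r_1^{\lambda_1}$ with $r_1\leq 167$ and $\lambda_1\leq 9$, so $n_1\leq 167^9\ll 2^{413}$. If $t=3$, the constraint $\tau(n_1)\leq 10$ forces $\lambda_1=\lambda_2=\lambda_3=1$, and $n_1\leq 167\cdot(3\cdot 10^{30})^2\ll 2^{413}$. The decisive case is $t=2$: write $n_1=r_1^{\lambda_1}r_2^{\lambda_2}$ with $r_1<r_2$. When $M>1$, both primes are at most $167$ with $\lambda_1+\lambda_2\leq 5$, giving $n_1\leq 167^5$. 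When $M=1$, $r_1=r_j\leq 167$ while $r_2\leq 3\cdot 10^{30}$ by the contradiction hypothesis, so $n_1\leq 167^{\lambda_1}(3\cdot 10^{30})^{\lambda_2}$; running over every admissible pair $(\lambda_1,\lambda_2)$ with $(\lambda_1+1)(\lambda_2+1)\leq 10$ shows the supremum of this bound is attained at $(\lambda_1,\lambda_2)=(1,4)$.

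The one arithmetic check that is actually tight is the inequality $167\cdot(3\cdot 10^{30})^4<2^{413}$; the left side is approximately $1.35\cdot 10^{124}$ while $2^{413}\approx 2.13\cdot 10^{124}$. Every other admissible sub-case has substantial slack, so the principal obstacle is purely bookkeeping: being careful to enumerate every exponent pattern allowed by $\tau(n_1)\leq 10$ before applying the trivial size bound. The fact that the threshold $3\cdot 10^{30}$ is calibrated so that precisely the $(1,4)$ sub-case becomes marginal strongly suggests the lemma is sharp given only the structural input available at this stage, and explains the specific numerical value appearing in the statement.
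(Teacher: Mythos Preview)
Your proof is correct and takes essentially the same approach as the paper's. The paper compresses your case analysis into the single observation that $\tau(n_1)\leq 10$ with each $\lambda_i\geq 1$ forces $\sum_i\lambda_i\leq 5$ (at least for $t\geq 2$; the case $t=1$ is trivial), giving $2^{413}<n_1\leq 167\,r_t^{4}$ directly and hence $r_t>(2^{413}/167)^{1/4}>3\cdot 10^{30}$.
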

\begin{proof}
We know that $n>2^{416}$. Since $n\leq 8n_1$, we have $n_1>2^{413}$. Since $\tau\left(n_1\right)\leq 10$, we have $\prod_{i=1}^t\left(\lambda_i+1\right)\leq 10$. Since $\lambda_i\geq 1$ for all $1\leq i\leq t$, we can see that this must imply that $\sum_{i=1}^t\lambda_i\leq 5$. Therefore, $2^{413}<167r_t^4$ since $r_t$ is the largest prime factor of $n_1$ and the smallest prime factor of $n_1$ is at most $167$. This implies that $r_t>3\cdot 10^{30}$.
\end{proof}
\begin{lemma}
We have $t=2$.
\end{lemma}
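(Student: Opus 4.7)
Since $t \ge 1$ is already established in the discussion preceding Lemma \ref{lem11}, and the bound $\tau(n_1) \le 10$ (a consequence of \eqref{eqn40} combined with the Maple fact $e_{r_j} \le 5$) forces $t \le 3$ (otherwise $\tau(n_1) \ge 2^4 = 16$), the task reduces to ruling out $t = 1$ and $t = 3$.

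\textbf{Step 1: ruling out $t=1$.} This should be essentially immediate. If $t = 1$, then $n_1 = r_1^{\lambda_1}$, so by Lemma \ref{lem17}, $r_1 > 3 \cdot 10^{30}$. On the other hand, Lemma \ref{lem13} forces $M \in \{1,3,5\}$. Either $r_1 \in \mathcal{I}$, in which case $M = r_1 \le 5$; or $r_1 \in \mathcal{J}$, in which case $r_j = r_1$ and the preceding lemma gives $r_1 < 173$. Both alternatives contradict $r_1 > 3 \cdot 10^{30}$.

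\textbf{Step 2: ruling out $t = 3$.} Here $2^3 = 8 \le \tau(n_1) \le 10$ forces $\tau(n_1) = 8$ and $\lambda_1 = \lambda_2 = \lambda_3 = 1$. Since $M \le 5$, the set $\mathcal{I}$ contains at most one of $r_1, r_2, r_3$, so $\mathcal{J}$ must contain $r_3$ together with at least one of $r_1, r_2$; in particular $r_j \le r_2$, and the preceding lemma yields $r_j < 173$. Now \eqref{eqn40} upgrades to $e_{r_j} \ge \tau(n_1)/2 = 4$. The plan is to sharpen the earlier Maple check to verify that for $3 \le P \le 79$ and odd primes $r \le 167$, the inequality $e_r \ge 4$ holds only on a small explicit list of $(P,r)$ pairs (or is in fact vacuous). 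If the list is empty, Step 2 is done. Otherwise, for each surviving pair $(P, r_j)$, the enormous size $r_3 > 3 \cdot 10^{30}$ plays the decisive role in \eqref{eqn12}: divisors $d \mid n$ containing $r_3$ satisfy $d \ge r_3$, so by Lemma \ref{lem6} each contribution $T_d \le 2\log d / d$ is negligible; divisors of $2^s r_1 r_2$ can be enumerated explicitly and give a manageable $L_1$-contribution. Matching the resulting upper bound on $l\log\alpha$ against the lower bound $l \ge 2$ furnished by Lemma \ref{lem} should produce the required contradiction.

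\textbf{Main obstacle.} The analytic part of the argument is toothless on its own: because $r_3$ is astronomical, every divisor sum over divisors of $n$ containing $r_3$ is tiny, so a purely analytic attack on \eqref{eqn12} cannot close the gap. The work must be carried by the algebraic constraint $e_{r_j} \ge 4$, which is severe enough that a refined Maple enumeration is expected to leave only a handful of candidate pairs $(P, r_j)$; the delicate part is confirming this numerically and then dispatching any surviving pair by direct computation, using the exact divisor structure $n = 2^s r_1 r_2 r_3$ together with Lemma \ref{lem13}.
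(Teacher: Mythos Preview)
Your plan is correct and matches the paper's proof closely: ruling out $t=1$ via Lemma~\ref{lem17} exactly as you do, and for $t=3$ deriving $\tau(n_1)=8$ and $e_{r_j}\ge 4$, whereupon the Maple enumeration over $3\le P\le 79$, $r_j\le 167$ leaves precisely $r_j=3$ with $P\in\{22,59\}$. The paper dispatches $P=59$ by the analytic route you sketch (the bound from \eqref{eqn12} with $r_2\ge 173$, $r_3>3\cdot10^{30}$, and $l\ge 2$ or $l\ge 10$ according as $M=1$ or $M=5$), but for $P=22$ that bound is too slack and the ``direct computation'' needed is an arithmetic $\nu_3$-count: since $487\mid v_3$ with $3^5\mid 486$, and $v_{3r'}$ has a primitive prime $\equiv 1\pmod 3$ for each odd prime $r'\mid n_1/3$, one obtains $3^6\mid u_m$, forcing $3\mid m$ and contradicting $3=r_j\in\mathcal J$.
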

\begin{proof}
Since the smallest prime factor of $n_1$ is at most $167$ and the largest prime factor of $n_1$ is larger than $3\cdot 10^{30}$, we have $t\geq 2$. We know that $t\leq 3$, so it remains to eliminate the possibility of $t=3$. Assume $t=3$ for a contradiction. Then $\tau\left(n_1\right)=8$ since $\tau\left(n_1\right)\leq 10$, so that $2e_{r_j}\geq 8$. Hence, $e_{r_j}\geq 4$. Checking with Maple, we see that this implies $r_j=3$, and $P=22$ or $P=59$. We divide into cases.
\setcounter{case}{0}
\begin{case}{$P=22$}
\newline
\newline
For $P=22$, we use Maple to see that $e_3=5$, $l_3=4$, and that $487\mid 10714=v_3$, where $487$ is prime. Therefore, $487\mid u_n$. Since $3^5\mid 486$, this gives $3^5\mid u_m$. Let $r'$ be a prime factor of $\frac{n_1}{3}$. Then $3r'\mid n_1$, and we have $v_{3r'}\mid u_n$. Also, $v_{3r'}$ has a primitive prime factor, say $q$. Then $q\equiv 1\pmod{3r'}$, so $3\mid q-1$. Therefore, $3^6\mid u_m$, but then $3\mid m$, since $e_3=5$, a contradiction.
\end{case}
\begin{case}{$P=59$}
\newline
\newline
By \eqref{eqn12} and Lemma \ref{lem6}, we have
\begin{align*}
2\log\alpha&<\log\log\alpha+2.163+\sum_{\substack{d\mid n\\d>\alpha^2}}\frac{2\log d}{d}\\
&\leq\log\log\alpha+2.163+\sum_{\substack{d\mid 72n_1\\d>\alpha^2}}\frac{2\log d}{d}\\
&\leq\log\log\alpha+2.163-\log 2-\frac{2\log 3}{3}-\frac{\log 4}{2}-\frac{\log 6}{3}-\frac{\log 8}{4}-\frac{2\log 9}{9}+\sum_{d\mid 72n_1}\frac{2\log d}{d}\\
&<\log\log\alpha-1.561+\sum_{d\mid 72n_1}\frac{2\log d}{d}.
\end{align*}
Lemma \ref{lem8} gives
\begin{equation*}
2\log\alpha<\log\log\alpha-1.561+4\left(\log 2+\sum_{i=1}^3\frac{\log r_i}{r_i-1}\right)\prod_{i=1}^3\left(\frac{r_i}{r_i-1}\right).
\end{equation*}
If $r'\leq 167$ is a prime factor of $n_1$ not equal to $M$, then we must have $e_{r'}\geq 4$, for the same reason that $e_{r_1}\geq 4$. Checking with Maple, we see that the prime $r_j=3$ is the only prime with this property. Therefore,
\begin{align*}
2\log\alpha&<\log\log\alpha-1.561+4\left(\log 2+\frac{\log 3}{2}+\frac{\log 173}{172}+\frac{\log\left(3\cdot 10^{30}+1\right)}{3\cdot 10^{30}}\right)\cdot\frac{3\cdot 173\cdot\left(3\cdot 10^{30}+1\right)}{2\cdot 172\cdot 3\cdot 10^{30}}\\
&<\log\log\alpha+6.118,
\end{align*}
if $M=1$, and 
\begin{align*}
10\log\alpha&<\log\log\alpha-1.561+4\left(\log 2+\frac{\log 3}{2}+\frac{\log 5}{4}+\frac{\log\left(3\cdot 10^{30}+1\right)}{3\cdot 10^{30}}\right)\cdot\frac{3\cdot 5\cdot\left(3\cdot 10^{30}+1\right)}{2\cdot 4\cdot 3\cdot 10^{30}}\\
&<\log\log\alpha+10.78,
\end{align*}
if $M=5$. Both imply that $\alpha<42$, so that $P\leq 42$, a contradiction.
\end{case}
\end{proof}
We now finish the proof of Theorem \ref{thm}. We have $t=2$, $r_j\leq 167$, and the largest prime factor of $n_1$ being larger than $3\cdot 10^{30}$. It follows that $M=1$, so that $r_j=r_1$ and $r_2=r_t>3\cdot 10^{30}$. We divide into cases.
\setcounter{case}{0}
\begin{case}{$r_1\geq 11$}
\newline
\newline
We know that $e_{r_1}\geq 2$. By Maple, we can deduce that $P\geq 5$, so that $\alpha^2>26$. By \eqref{eqn12} and Lemma \ref{lem6}, we have
\begin{align*}
2\log\alpha&<\log\log\alpha+2.163+\sum_{\substack{d\mid n\\d>\alpha^2}}\frac{2\log d}{d}\\
&\leq\log\log\alpha+2.163+\sum_{\substack{d\mid 16n_1\\d>26}}\frac{2\log d}{d}\\
&\leq\log\log\alpha+2.163-\log 2-\frac{\log 4}{2}-\frac{\log 8}{4}-\frac{\log 16}{8}+\sum_{d\mid 16n_1}\frac{2\log d}{d}\\
&<\log\log\alpha-0.089+\sum_{d\mid 16n_1}\frac{2\log d}{d}.
\end{align*}
Lemma \ref{lem8} gives
\begin{align*}
2\log\alpha&<\log\log\alpha-0.089+4\left(\log 2+\sum_{i=1}^3\frac{\log r_i}{r_i-1}\right)\prod_{i=1}^2\left(\frac{r_i}{r_i-1}\right)\\
&<\log\log\alpha-0.089+4\left(\log 2+\frac{\log 11}{10}+\frac{\log\left(3\cdot 10^{30}+1\right)}{3\cdot 10^{30}}\right)\cdot\frac{11\cdot\left(3\cdot 10^{30}+1\right)}{10\cdot 3\cdot 10^{30}}\\
&<\log\log\alpha+4.016.
\end{align*}
This implies that $\log\alpha<12$, so that $P\leq 11$. Maple tells us the only possibility is $P=5$ and $r_1=11$. We also use Maple to see that $e_{11}=2$, $z(11)=12$, and that $(643457\cdot 23)\mid 73997555=v_{11}$, where $23$ and $643457$ are prime. Therefore, $11^2\mid(643456\cdot 22)\mid u_m$. Also, $11r_2\mid n_1$, and we have $v_{11r_2}\mid u_n$. Also, $v_{11r_2}$ has a primitive prime factor, say $q$. Then $q\equiv 1\pmod{11r_2}$, so $11\mid q-1$. Therefore, $11^3\mid u_m$. But then $11\mid m$, since $e_3=2$, so that $11\mid M$, a contradiction.
\end{case}
\begin{case}{$r_1=7$}
\newline
\newline
We know that $e_7\geq 2$. By Maple, we can deduce that $P\geq 12$, so that $\alpha^2>145$. By \eqref{eqn12} and Lemma \ref{lem6}, we have
\begin{align*}
2\log\alpha&<\log\log\alpha+2.163+\sum_{\substack{d\mid n\\d>\alpha^2}}\frac{2\log d}{d}\\
&\leq\log\log\alpha+2.163+\sum_{\substack{d\mid 896n_1\\d>145}}\frac{2\log d}{d}\\
&\leq\log\log\alpha+2.163-\log 2-\frac{\log 4}{2}-\frac{\log 8}{4}-\frac{\log 16}{8}-\frac{\log 32}{16}-\frac{\log 64}{32}-\frac{\log 128}{64}\\
&\quad-\frac{2\log 7}{7}-\frac{\log 14}{7}-\frac{\log 28}{14}-\frac{\log 56}{28}-\frac{\log 112}{56}-\frac{2\log 49}{49}-\frac{\log 98}{49}+\sum_{d\mid 896n_1}\frac{2\log d}{d}\\
&<\log\log\alpha-2.163+\sum_{d\mid 896n_1}\frac{2\log d}{d}.
\end{align*}
Lemma \ref{lem8} gives
\begin{align*}
2\log\alpha&<\log\log\alpha-2.163+4\left(\log 2+\sum_{i=1}^3\frac{\log r_i}{r_i-1}\right)\prod_{i=1}^2\left(\frac{r_i}{r_i-1}\right)\\
&<\log\log\alpha-2.163+2\left(\log 2+\frac{\log 7}{6}+\frac{\log\left(3\cdot 10^{30}+1\right)}{3\cdot 10^{30}}\right)\cdot\frac{2\cdot 7\cdot\left(3\cdot 10^{30}+1\right)}{6\cdot 3\cdot 10^{30}}\\
&<\log\log\alpha+2.586.
\end{align*}
This implies that $\alpha<5$, so that $P\leq 4$, a contradiction to $P\geq 12$.
\end{case}
\begin{case}{$r_1=5$}
\newline
\newline
We know that $e_5\geq 2$. By Maple, we can deduce that $P\geq 7$, so that $\alpha^2>50$. By \eqref{eqn12} and Lemma \ref{lem6}, we have
\begin{align*}
2\log\alpha&<\log\log\alpha+2.163+\sum_{\substack{d\mid n\\d>\alpha^2}}\frac{2\log d}{d}\\
&\leq\log\log\alpha+2.163+\sum_{\substack{d\mid 800n_1\\d>50}}\frac{2\log d}{d}\\
&\leq\log\log\alpha+2.163-\log 2-\frac{\log 4}{2}-\frac{\log 8}{4}-\frac{\log 16}{8}-\frac{\log 32}{16}-\frac{2\log 5}{5}\\
&\quad-\frac{\log 10}{5}-\frac{\log 20}{10}-\frac{\log 40}{20}-\frac{2\log 25}{25}-\frac{\log 50}{25}+\sum_{d\mid 800n_1}\frac{2\log d}{d}\\
&<\log\log\alpha-2.308+\sum_{d\mid 800n_1}\frac{2\log d}{d}.
\end{align*}
Lemma \ref{lem8} gives
\begin{align*}
2\log\alpha&<\log\log\alpha-2.308+4\left(\log 2+\sum_{i=1}^3\frac{\log r_i}{r_i-1}\right)\prod_{i=1}^2\left(\frac{r_i}{r_i-1}\right)\\
&<\log\log\alpha-2.308+2\left(\log 2+\frac{\log 5}{4}+\frac{\log\left(3\cdot 10^{30}+1\right)}{3\cdot 10^{30}}\right)\cdot\frac{2\cdot 5\cdot\left(3\cdot 10^{30}+1\right)}{4\cdot 3\cdot 10^{30}}\\
&<\log\log\alpha+3.17.
\end{align*}
This implies that $\alpha<7$, so that $P\leq 6$, a contradiction to $P\geq 7$.
\end{case}
\begin{case}{$r_1=3$}
\newline
\newline
We know that $e_3\geq 2$. By Maple, we can deduce that $P\geq 4$, so that $\alpha^4>17$. By \eqref{eqn12} and Lemma \ref{lem6}, we have
\begin{align*}
2\log\alpha&<\log\log\alpha+2.163+\sum_{\substack{d\mid n\\d>\alpha^2}}\frac{2\log d}{d}\\
&\leq\log\log\alpha+2.163+\sum_{\substack{d\mid 48n_1\\d>17}}\frac{2\log d}{d}\\
&\leq\log\log\alpha+2.163-\log 2-\frac{\log 4}{2}-\frac{\log 8}{4}-\frac{\log 16}{8}-\frac{2\log 3}{3}-\frac{\log 6}{3}\\
&\quad-\frac{\log 12}{6}-\frac{2\log 9}{9}+\sum_{d\mid 48n_1}\frac{2\log d}{d}\\
&<\log\log\alpha-2.321+\sum_{d\mid 800n_1}\frac{2\log d}{d}.
\end{align*}
Lemma \ref{lem8} gives
\begin{align*}
2\log\alpha&<\log\log\alpha-2.321+4\left(\log 2+\sum_{i=1}^3\frac{\log r_i}{r_i-1}\right)\prod_{i=1}^2\left(\frac{r_i}{r_i-1}\right)\\
&<\log\log\alpha-2.321+2\left(\log 2+\frac{\log 3}{2}+\frac{\log\left(3\cdot 10^{30}+1\right)}{3\cdot 10^{30}}\right)\cdot\frac{2\cdot 3\cdot\left(3\cdot 10^{30}+1\right)}{2\cdot 3\cdot 10^{30}}\\
&<\log\log\alpha+5.134.
\end{align*}
This implies that $\alpha<23.1$, so that $P\leq 22$. We have already dealt with the case $P=22$, so we may assume that $P\leq 21$. According to Maple, the possibilities of $P$ are $4,5,9,13,14$, and $18$. For the cases of $P=4,9,13,14,18$, we use Maple to see that $e_3=2$. In the case of $P=13$, we can see that there exists two primes that are $\equiv 1\pmod 3$ and that divide $v_3$ ($13$ and $43$). In the cases of $P=4$ and $P=14$, we can see that there exists a prime that is $\equiv 1\pmod 9$ and that divides $v_3$ ($19$ for $P=4$ and $199$ for $P=14$). In the cases of $P=9$ and $P=18$, we can see that $27\mid v_3$. So in all cases, except $P=5$, we have $9\mid\varphi\left(v_3\right)$. Therefore, we can repeat the argument given for $P=22$ to see that this leads to $3\mid M$, a contradiction. It remains to deal with $P=5$.
\newline
\newline
For $P=5$, we note that $e_3=3$. So, since $3\nmid m$, this means that $81\nmid u_m$. $v_{3r_2}$ must have a primitive prime factor, say $q$, such that $q\equiv 1\pmod{3r_2}$. So $3\mid q-1$. We have $6r_2\mid n$. Suppose that $6r_2<n$. Then there exists a prime factor, say $r'$, such that $r'\mid\frac{n}{6r_2}$. If $r'=2$, then we have $3^5\mid\varphi\left(u_{12}\right)\mid u_m$, leading to $3^4\mid u_m$, a contradiction. So $r'$ is odd. But then $v_{3r_2r'}$ has a primitive prime factor, say $q'$, and we can see that $q'\equiv 1\pmod 3$. Therefore, $81\mid\varphi\left(u_6\right)(q-1)\left(q'-1\right)\mid u_m$, again, a contradiction. Therefore, $n=6r_2$. By Lemmas \ref{lem2} and \ref{lem9}, we can see that $u_6=3640$ and $\frac{u_{6r_2}}{u_6}$ must be coprime. We have
\begin{equation*}
26.96<\alpha^2<\frac{u_n}{u_m}=\frac{u_6}{\varphi\left(u_6\right)}\cdot\frac{\displaystyle\frac{u_{6r_2}}{u_6}}{\varphi\left(\displaystyle\frac{u_{6r_2}}{u_6}\right)}<3.16\cdot\frac{\displaystyle\frac{u_{6r_2}}{u_6}}{\varphi\left(\displaystyle\frac{u_{6r_2}}{u_6}\right)}.
\end{equation*}
If $p$ is a prime factor of $\frac{u_{6r_2}}{u_6}$, then $p$ must be a primitive prime factor of $u_{r_2}$, $u_{2r_2}$, $u_{3r_2}$, or $u_{6r_2}$. In all cases, we have $p\equiv\pm 1\pmod{r_2}$, so that $r_2\leq p+1$. Therefore,
\begin{equation*}
\frac{\displaystyle\frac{u_{6r_2}}{u_6}}{\varphi\left(\displaystyle\frac{u_{6r_2}}{u_6}\right)}\leq\left(1+\frac{1}{r_2-2}\right)^y
\end{equation*}
where $y$ is the number of distinct prime factors of $\frac{u_{6r_2}}{u_6}$. We also have $2^y\mid u_m$. Then we have $2^{y-2}<m<n=6r_2$. Then
\begin{equation*}
y-2<\frac{\log 6+\log r_2}{\log 2},
\end{equation*}
or
\begin{equation*}
y<\frac{\log 24+\log r_2}{\log 2}.
\end{equation*}
So we have
\begin{equation*}
26.96<3.16\cdot\left(1+\frac{1}{r-2}\right)^{\frac{\log 24+\log r_2}{\log 2}}<3.16e^{\frac{\log 24+\log r_2}{(r-2)\log 2}}<3.16e^{\frac{\log 24+\log\left(3\cdot 10^{30}+1\right)}{\left(3\cdot 10^{30}-1\right)\log 2}}<3.17,
\end{equation*}
a contradiction. 
\end{case}
This finishes the proof of Theorem \ref{thm}.
\section{Future Work}
There are various future directions this research could go in. Here we managed to find all of the solutions to $\varphi\left(u_n\right)=u_m$ where $\left(u_n\right)_n$ is a Lucas sequence of the first kind with recurrence relation $u_n=Pu_{n-1}+u_{n-2}$. However, the general recurrence relation for a Lucas sequence is $u_n=Pu_{n-1}+Qu_{n-2}$, so it would be interesting to investigate the case of $Q\neq 1$. As well, we may replace $\left(u_n\right)_n$ with a Lucas sequence of a second kind, or any other binary recurrence sequence. We can also ask what happens when we replace the Euler totient function with another arithmetic function, such as the sum of the divisors function or the sum of the $k$th powers of the divisors function (Luca managed to do this with the regular Fibonacci sequence \cite{luca3}, but we can also ask this question for other binary recurrence sequences as well). 
\section*{Acknowledgements}
The author would like to thank the University of Calgary for the award of a postdoctoral fellowship, which made this research possible.

\end{document}